\newcommand{\R}{\mathbb R}
\newcommand{\p}{\partial}
\newcommand{\ve}{\varepsilon}
\newcommand{\f}{\frac}
\newcommand{\la}{\lambda}
\newcommand{\al}{\alpha}
\newcommand{\vp}{\varphi}
\newcommand{\dl}{\delta}
\newcommand{\ds}{\displaystyle}
\newcommand{\md}{\mathrm{d}}
\newcommand{\crit}{\textup{crit}}
\newcommand{\conf}{\textup{conf}}
\theoremstyle{plain}
\newtheorem{theorem}{Theorem}[section]
\newtheorem{lemma}[theorem]{Lemma}
\newtheorem{remark}{Remark}[section]
\numberwithin{equation}{section}
\title{On semilinear Tricomi equations in one space dimension}
  \author{Daoyin He$^{1*}$,\qquad Ingo Witt$^{2,*}$, \qquad Huicheng
  Yin$^{3,}$\footnote{He Daoyin (\texttt{daoyin@mathematik.uni-goettingen.de})
    and Yin Huicheng
    (\texttt{huicheng@} \texttt{nju.edu.cn}) are supported by the NSFC
    (No.~11571177, No.~11731007) and by the Priority Academic Program Development of
    Jiangsu Higher Education Institutions.}\vspace{0.5cm}\\ \small 1.
  School of Mathematical Sciences, Fudan University, Shanghai
  200433, China.\\ \small 2.  Mathematical Institute, University of
  G\"{o}ttingen, Bunsenstr.~3-5, D-37073 G\"{o}ttingen,
  Germany.\\ \small 3.  School of Mathematical Sciences and Mathematical Institute, Nanjing Normal University,\\
 \small  Nanjing 210023, China.\\}
\begin{document}
\date{}

\maketitle
\thispagestyle{empty}

\begin{abstract}
For 1-D semilinear Tricomi equation $\partial_t^2 u-t\p_x^2u=|u|^p$ with initial data $(u(0,x), \p_t u(0,x))$
$=(u_0(x), u_1(x))$,
where $t\ge 0$, $x\in\R$, $p>1$, and $u_i\in
C_0^{\infty}(\R)$ ($i=0,1$),  we shall prove that there exists a critical exponent
$p_{\crit}=5$ such that the small data weak solution $u$ exists globally when
$p>p_{\crit}$; on the other hand, the weak solution $u$, in general,
blows up in finite time when $1<p<p_{\crit}$. We specially point out
that for 1-D semilinear wave equation $\partial_t^2 v-\p_x^2v=|v|^p$, the weak solution $v$
will generally blow up in finite time
for any $p>1$.
By this paper and \cite{HWYin1}-\cite{HWYin3}, we have given a systematic study
on the blowup or
global existence of small data solution $u$ to
the equation $\partial_t^2 u-t\Delta u=|u|^p$  for all
space dimensions. One of the main ingredients in the paper is to establish
a crucial weighted Strichartz-type inequality  for 1-D linear degenerate equation
$\partial_t^2 w-t\partial_x^2 w=F(t,x)$ with $(w(0,x), \p_tw(0,x))=(0,0)$,
i.e., an inequality with the weight $(\f{4}{9}t^3-|x|^2)^{\al}$
between the solution $w$ and the function $F$
is derived for some real numbers $\al$.
\end{abstract}

\vskip 0.2 true cm

\textbf{Keywords:}  Tricomi equation, critical exponent,
weighted Strichartz estimate, global existence,

\qquad\qquad \quad blowup.

\vskip 0.2 true cm

{\bf Mathematical Subject Classification 2000:} 35L70, 35L65,
35L67

\vskip 0.3 true cm

\section{Introduction}
In our former papers \cite{HWYin1, HWYin2, HWYin3}, for the multi-dimensional
semilinear Tricomi equation $\partial_t^2 u-t\Delta u=|u|^p$ with initial data $(u(0,x), \p_t u(0,x))$
$=(u_0(x), u_1(x))$,
where $t\ge 0$, $x\in\R^n$ with $n\ge 2$, $p>1$, and $u_i\in
C_0^{\infty}(\R^n)$ ($i=0,1$), we have given a systematic study
on the blowup or
global existence of small data solution $u$.
In this paper, we focus on the 1-D semilinear Tricomi equation:
\begin{equation}\label{equ:original}
\left\{ \enspace
\begin{aligned}
&\partial_t^2 u-t\partial_x^2u=|u|^p \quad \text{in} \quad \R_+^{1+1},\\
&u(0,x)=u_0(x), \quad \partial_{t} u(0,x)=u_1(x),
\end{aligned}
\right.
\end{equation}
where $p>1$, $u_i(x)\in
C_0^{\infty}(\R)$ ($i=0,1$) and \(\operatorname{supp}u_i\in (-M, M)\) for some fixed constant \(M>1\).
For the local existence and regularity of
solution~$u$ of \eqref{equ:original} under weaker regularity
assumptions on $(u_0, u_1)$, the reader may consult
\cite{Rua1}-\cite{Rua4} and \cite{Yag2}-\cite{Yag3}.

Our present purpose is to
determine a critical exponent $p_{\crit}=5$ such that
the small data weak solution $u$  of \eqref{equ:original} exists globally when
$p>p_{\crit}$; on the other hand, the weak solution $u$, in general,
blows up in finite time when $1<p<p_{\crit}$.
Since the local existence of  weak solution $u$
to semilinear Tricomi equations with minimal regularities has been established in \cite{Rua4}, without loss of generality,
as in \cite{HWYin2, HWYin3} we only focus on the global small data  weak solution problem of \eqref{equ:original} starting from some
positive time $T_0>0$. Therefore, it is plausible that one utilizes the nonlinear function
$F_p(t,u)=\big(1-\chi(t)\big)F_p(u)+\chi(t)|u|^p$ instead of $|u|^p$ in \eqref{equ:original}, where
$F_p(u)$ is a $C^{\infty}-$smooth function with $F_p(0)=0$ and
$|F_p(u)|\leq C(1+|u|)^{p-1}|u|$, and $\chi(s)\in C^{\infty}(\Bbb R)$
with \(\chi(s)=
\left\{ \enspace
\begin{aligned}
1, \quad &s\geq T_0, \\
0, \quad &s\leq T_0/2.
\end{aligned}
\right.\)
\quad  Correspondingly, we shall study the following problem instead
of \eqref{equ:original}
\begin{equation}
\left\{ \enspace
\begin{aligned}
&\partial_t^2 u-t\partial_x^2 u =F_p(t,u)\quad \text{in} \quad \R_+^{1+1},\\
&u(0,x)=\ve u_0(x), \quad \partial_{t} u(0,x)=\ve u_1(x).\\
\end{aligned}
\right.
\label{equ:1.2}
\end{equation}
\begin{theorem}[{\bf Global existence for $p>p_{\crit}$}]\label{thm:1.2}
Assume that $p>p_{\crit}\equiv 5$. Then there exists a constant $\ve_0>0$ such that, for \(0<\ve\leq\ve_0\), problem
\eqref{equ:1.2} admits a global weak solution $u$ such that
\begin{equation}\label{equ:1.3}
\left(1+\big|\phi^2(t)-|x|^2\big|\right)^{\gamma}u\in L^{p+1}(\mathbb{R}^{1+1}_+),
\end{equation}
where  $\phi(t)=\f{2}{3}t^{\f{3}{2}}$, and the positive constant $\gamma$ fulfills
\begin{equation}\label{equ:1.4}
0<\gamma<\f{1}{6}-\f{5}{6(p+1)}.
\end{equation}
\end{theorem}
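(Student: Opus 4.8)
The plan is to recast \eqref{equ:1.2} as a Duhamel fixed-point problem and to solve it by contraction in a weighted $L^{p+1}$ space, with the weighted Strichartz-type inequality advertised in the abstract serving as the linear driver. Let $L$ denote the solution operator of the inhomogeneous linear equation $\p_t^2 w-t\p_x^2 w=F$ with vanishing Cauchy data, and let $u^{\mathrm{lin}}$ be the free Tricomi evolution carrying the data $(\ve u_0,\ve u_1)$; any weak solution of \eqref{equ:1.2} then satisfies
\[
u=u^{\mathrm{lin}}+L\bigl(F_p(\cdot,u)\bigr).
\]
Writing $W(t,x)=1+\bigl|\phi^2(t)-|x|^2\bigr|$, I would fix $\gamma$ in the range \eqref{equ:1.4} and work in $X=\{u:\|u\|_X:=\|W^{\gamma}u\|_{L^{p+1}(\mathbb{R}^{1+1}_+)}<\infty\}$, aiming to show that $\Phi(u):=u^{\mathrm{lin}}+L(F_p(\cdot,u))$ is a contraction on a small closed ball of $X$. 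The homogeneous term is disposed of first: from the explicit representation of linear Tricomi solutions, the compact support of the data, and the homogeneous analogue of the weighted estimate, one gets $\|u^{\mathrm{lin}}\|_X\le C_0\,\ve$.

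The crux is the inhomogeneous term. Because $F_p(t,u)=|u|^p$ for $t\ge T_0$, while the bounded-time contribution from $t\le T_0$ is controlled by local theory and finite propagation (adding only $O(\ve)$ to $\|\cdot\|_X$), it suffices to estimate $L(|u|^p)$ over $\{t\ge T_0\}$. I would invoke the weighted Strichartz estimate in the form
\[
\|W^{\gamma}\,LF\|_{L^{p+1}}\le C\,\|W^{p\gamma}F\|_{L^{(p+1)/p}},
\]
and then, using the exact algebraic identity
\[
\bigl\|W^{p\gamma}|u|^p\bigr\|_{L^{(p+1)/p}}=\bigl\|W^{\gamma}u\bigr\|_{L^{p+1}}^{\,p}=\|u\|_X^{\,p},
\]
conclude $\|L(F_p(\cdot,u))\|_X\le C_1\|u\|_X^{\,p}+O(\ve)$. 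The pairing of the exponent $\gamma$ on the solution side with $p\gamma$ on the source side is forced precisely by this identity, so the whole scheme hinges on the weighted estimate being valid for that pair of weights.

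Combining the bounds gives $\|\Phi(u)\|_X\le C_0\ve+C_1\|u\|_X^{\,p}$, so $\Phi$ preserves the ball $\{\|u\|_X\le 2C_0\ve\}$ once $C_1(2C_0\ve)^{p-1}\le\tfrac12$, which holds for small $\ve$ since $p>1$; the contraction estimate follows the same way from $|F_p(t,u)-F_p(t,v)|\lesssim(|u|^{p-1}+|v|^{p-1})|u-v|$ and H\"older's inequality, yielding $\|\Phi(u)-\Phi(v)\|_X\le\tfrac12\|u-v\|_X$ on that ball. Banach's theorem then produces a unique fixed point $u\in X$, which one verifies is a weak solution of \eqref{equ:1.2}: since $|u|^p\in L^{(p+1)/p}_{\mathrm{loc}}$, the nonlinearity pairs legitimately against test functions. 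Because $W\ge1$, membership in $X$ for the largest admissible $\gamma$ forces it for every smaller $\gamma>0$, so the constructed solution satisfies \eqref{equ:1.3} throughout the interval \eqref{equ:1.4}.

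The main obstacle is not the fixed-point machinery but the weighted Strichartz estimate and, above all, the sharp determination of the weights for which it holds. A scaling heuristic for the Tricomi operator — using $u_\lambda(t,x)=u(\lambda^2 t,\lambda^3 x)$, which sends the source $F$ to $\lambda^4 F(\lambda^2 t,\lambda^3 x)$ and the weight $\tfrac49 t^3-|x|^2$ to $\lambda^6$ times itself — shows that a scale-invariant estimate must relate the solution weight $\gamma$ and the source weight $b$ by $b-\gamma=\frac{9-p}{6(p+1)}$. Imposing the nonlinear matching $b=p\gamma$ pins the scale-critical exponent at $\gamma=\frac{9-p}{6(p-1)(p+1)}$, while integrability of the solution weight demands $\gamma<\frac16-\frac{5}{6(p+1)}$; the two values coincide at $p=5$ (both equal $\tfrac{1}{36}$) and admit a nonempty gap exactly when $p>5$. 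This overlap condition is the analytic origin of $p_{\crit}=5$ and of the admissible range \eqref{equ:1.4}, and carrying out the rigorous proof of the estimate across the degeneracy at $t=0$ and along the characteristic cone $\phi^2(t)=|x|^2$ is where the real work lies.
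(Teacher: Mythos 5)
Your overall strategy is the same as the paper's: Section 4 of the paper runs exactly this contraction (as a Picard iteration) in the weighted norm $\big\|\big((\phi(t)+M)^2-|x|^2\big)^{\gamma}\,\cdot\,\big\|_{L^{p+1}}$, with the same $\gamma$ versus $p\gamma$ pairing between solution and source, and your scaling computation reproduces the paper's numerology faithfully: the constraint $\alpha+\frac{1}{6}+\beta=\frac{5}{3q}$ in its Theorem 3.2 is precisely your scale-invariance relation $\beta-(-\alpha)=\frac{9-p}{6(p+1)}$, and the collision of the scale-critical exponent $\frac{9-p}{6(p-1)(p+1)}$ with the integrability bound $\frac{1}{6}-\frac{5}{6(p+1)}$ at $p=5$ is exactly where $p_{\crit}=5$ comes from. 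The genuine gap is the one you concede in your final paragraph, and it is not a side issue but the substance of the theorem: the weighted inequalities you ``invoke'' are not available off the shelf and occupy Sections 2 and 3 of the paper. In one space dimension the standard route (stationary-phase $L^1$--$L^\infty$ decay interpolated against $L^2$--$L^2$) fails, so the paper proves the homogeneous bound from the explicit confluent-hypergeometric representation of the propagators plus dyadic kernel estimates (Theorem 2.1), and proves the Duhamel bound by writing $w$ through Yagdjian's explicit formula, passing to characteristic coordinates $u=T+x$, $v=T-x$, $\xi=S+y$, $\eta=S-y$, and applying the Georgiev--Lindblad--Sogge weighted fractional-integration lemma (Lemma 3.1) in each null variable, followed by a translation/covering argument (Theorem 3.3) because the actual source is supported in $\{|x|\le\phi(t)+M-1\}$ rather than inside the light cone issued from the origin. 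As written, your proposal reduces the theorem to an unproved estimate.

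There is also a concrete error of scope: your claim that the single pairing $(\gamma,p\gamma)$ with $\gamma>0$ settles every $p>5$ does not survive. The fractional-integration lemma behind the Duhamel estimate requires the sum of the two weight exponents to be nonnegative, i.e.\ $\frac{2}{q}-\frac{1}{3q}-\frac{1}{6}\ge 0$, which forces $q\le 10$, i.e.\ $p\le 9$ --- the one-dimensional conformal exponent $p_{\conf}(1)=9$. Equivalently, for $p>9$ your scale-critical $\gamma$ is negative, so any positive $\gamma$ with source weight $p\gamma$ lies strictly above the scale-invariant line in the direction that this method cannot reach. The paper therefore splits the proof into Case I, $p_{\crit}<p<9$, handled as above, and Case II, $p\ge 9$, which it settles by a different argument imported from \cite{HWYin1}. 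Any completion of your proposal must either make the same case distinction or supply a new estimate valid beyond the conformal exponent.
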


With respect to the case of  $1<p<p_{\crit}$, we have

\begin{theorem}[{\bf Blow up for $1<p<p_{\crit}$}]\label{thm1.1}
  Let $1<p<p_{\crit}\equiv 5$. In addition, $u_i\ge 0$ and
  $u_i\not\equiv 0$ for $i=0, 1$ are assumed. Then problem \eqref{equ:original} admits
  no global weak  solution $u$ with $u\in C\left([0, \infty), H^1(\R)\right)
  \cap C^1\left([0, \infty), L^2(\R)\right)$.
\end{theorem}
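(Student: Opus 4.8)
\section*{Proof proposal for Theorem \ref{thm1.1}}

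The plan is to argue by contradiction through a convexity (Kato-type) estimate for the spatial average of $u$, which is the degenerate analogue of the classical ODE blow-up method and already detects the correct threshold $p_\crit=5$. Suppose a global weak solution $u\in C([0,\infty),H^1(\R))\cap C^1([0,\infty),L^2(\R))$ of \eqref{equ:original} did exist, and put $G(t)=\int_\R u(t,x)\,\md x$. Testing the weak formulation against spatial cut-offs equal to $1$ on the (compact) support of $u(t,\cdot)$ and using $\int_\R t\,\p_x^2u\,\md x=0$, one obtains, in integrated form, $G'(t)-G'(0)=\int_0^t\!\!\int_\R|u|^p\,\md x\,\md s$. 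Hence $G$ is convex, and since $G(0)=\int_\R u_0\,\md x>0$ and $G'(0)=\int_\R u_1\,\md x>0$ by the sign assumptions on the data, one gets $G'(t)\ge G'(0)>0$ and the linear lower bound $G(t)\ge G'(0)\,t$.

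The next step is to quantify the gain from the nonlinearity. The Tricomi operator has finite propagation speed along the characteristics $x=\pm\phi(t)$, $\phi(t)=\f23t^{3/2}$, so that $\operatorname{supp}u(t,\cdot)\subset\{|x|\le M+\phi(t)\}$ and therefore $|\operatorname{supp}u(t,\cdot)|\le C(1+t)^{3/2}$. Applying H\"older's inequality on this set, $G(t)\le\big(\int_\R|u|^p\,\md x\big)^{1/p}\,|\operatorname{supp}u(t,\cdot)|^{(p-1)/p}$, which combined with the previous identity yields the differential inequality (again in integrated form)
\[
G''(t)\ge \f{c\,G(t)^p}{(1+t)^{\f{3(p-1)}2}},\qquad t\ge0,
\]
with $c>0$. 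This is the point at which the exponent $3/2$ in the support growth --- the signature of the degeneracy $t\,\p_x^2$ --- enters and eventually fixes $p_\crit$.

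To close the argument I would localize and compare with an autonomous ODE. Fix a large time $t_1$; on $[t_1,2t_1]$ one has $(1+t)^{-3(p-1)/2}\ge c't_1^{-3(p-1)/2}$, so the inequality above reduces to $G''\ge\kappa G^p$ with $\kappa\simeq t_1^{-3(p-1)/2}$ and data $G(t_1)\ge G'(0)t_1>0$, $G'(t_1)>0$. A standard energy computation (for $y''=\kappa y^p$, multiply by $y'$ and integrate) shows this forces blow-up within a time $T_*\simeq\kappa^{-1/2}G(t_1)^{-(p-1)/2}\simeq t_1^{(p-1)/4}$. Since $(p-1)/4<1$ precisely when $p<5$, for such $p$ one has $T_*<t_1$ once $t_1$ is large, so $G$ --- and hence $u$ --- blows up before time $2t_1$, contradicting global existence. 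This is exactly where the hypothesis $1<p<p_\crit=5$ is used, and it is consistent with the wave case ($\p_x^2$ in place of $t\,\p_x^2$), where the same bookkeeping gives support growth $\sim t$ and blow-up for every $p>1$.

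I expect the main obstacle to be not the ODE endgame but the rigorous justification of the two identities for merely weak solutions of a degenerate equation: establishing finite propagation speed and the precise domain of influence $\{|x|\le M+\phi(t)\}$ for the Tricomi operator, and deriving the convexity relation for $G$ (and the H\"older step) within the class $C([0,\infty),H^1(\R))\cap C^1([0,\infty),L^2(\R))$ rather than for classical solutions. Both should follow from the weak formulation tested against characteristic-adapted cut-off functions together with a density/approximation argument; the delicate part is ensuring that the boundary contributions from the data carry the correct sign and that the support estimate is sharp enough to produce the exponent $\f{3(p-1)}2$ on the nose, since any loss there would shift the computed threshold away from $5$.
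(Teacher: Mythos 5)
Your proposal is correct and it reaches the sharp threshold $p_{\crit}=5$, but by a genuinely different and more elementary route than the paper's. The two arguments share the same Riccati inequality: finite propagation speed gives $\operatorname{supp}u(t,\cdot)\subseteq\{|x|\le M+\phi(t)\}$, and H\"older's inequality then yields $G''(t)\ge c\,(M+t)^{-\frac{3}{2}(p-1)}G(t)^p$, i.e.\ \eqref{equ:2.2} with $q=\frac{3}{2}(p-1)$. They differ in how the lower bound on $G$ is produced and how the ODE is closed. The paper constructs the test function $\psi(t,x)=\lambda(t)e^{x}$, with $\lambda(t)=C_1t^{1/2}K_{1/3}\big(\phi(t)\big)$ the decaying modified-Bessel solution of $\lambda''=t\lambda$, proves the auxiliary bound $G_1(t)=\int_{\R}u\psi\,\md x\ge Ct^{-1/2}$ (Lemma~\ref{lem2.3}, imported from \cite{HWYin1}), deduces $G''\ge ct^{-p/4}$, integrates to get $G\gtrsim t^{2-p/4}$ when $p<4$ (resp.\ $G\gtrsim t$ when $4\le p<5$), and concludes with Sideris' lemma (Lemma~\ref{lem2.1}) in two separate cases. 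You instead observe that in one space dimension plain convexity already suffices: $G''=\int_{\R}|u|^p\,\md x\ge0$ together with $G(0)>0$, $G'(0)>0$ gives $G(t)\ge G'(0)\,t$, and your localized comparison on $[t_1,2t_1]$ --- where the inequality freezes to $G''\ge\kappa G^p$ with $\kappa\simeq t_1^{-3(p-1)/2}$ --- bounds the blow-up time by $T_*\lesssim\kappa^{-1/2}G(t_1)^{-(p-1)/2}\simeq t_1^{\frac{3(p-1)}{4}-\frac{p-1}{2}}=t_1^{\frac{p-1}{4}}$, which is $<t_1$ for large $t_1$ exactly when $p<5$; your bookkeeping here is correct, and the $y'y''$ energy computation is legitimate since $\int_{y_0}^{\infty}\big(y^{p+1}-y_0^{p+1}\big)^{-1/2}\md y=C_py_0^{-(p-1)/2}<\infty$, so the endgame is self-contained and replaces the citation of \cite{Sid}. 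In effect you have noticed what the paper's own case $4\le p<5$ already reveals --- with $\alpha=1$ Sideris' condition $(p-1)\alpha\ge q-2$ reads precisely $p\le5$ --- so the Bessel-function machinery is not needed for the 1-D subcritical result; it is inherited from the $n\ge2$ argument of \cite{HWYin1}, where the support grows too fast for convexity alone, and what it buys here is stronger intermediate information (the data-independent bound $G''\gtrsim t^{-p/4}$ and the superlinear bound $G\gtrsim t^{2-p/4}$ for $p<4$), whereas your proof exploits the purely 1-D fact that support growth $\sim t^{3/2}$ lets the linear lower bound hit the threshold $5$ on the nose. Finally, the two foundational points you flag as delicate --- the domain of influence $\{|x|\le M+\phi(t)\}$ and the identity $G'(t)-G'(0)=\int_0^t\!\int_{\R}|u|^p\,\md x\,\md s$ in the stated regularity class --- are assumed with the same brevity in the paper's proof, and within that class they are unproblematic (for $u\in C([0,\infty),H^1(\R))$ with compact support, $t\mapsto\int_{\R}|u(t,\cdot)|^p\md x$ is continuous, so $G\in C^2$ and all steps are classical); hence they do not constitute a gap relative to the paper.
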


\begin{remark}
For brevity, in the  present paper we only study the semilinear Tricomi equation instead of the
generalized semilinear Tricomi equation $\partial_t^2 u-t^m\Delta u =|u|^p$
$(m\in\Bbb N)$ in problem \eqref{equ:original}. In fact, by the analogous methods in Theorem 1.1-
Theorem 1.2 and \cite{HWYin1}-\cite{HWYin2}, we can establish the same results to Theorem 1.1-Theorem 1.2 for the
generalized semilinear Tricomi equation $\partial_t^2 u-t^m\partial_x^2u=|u|^p$ ($m\in\Bbb N$) with the critical power $p_{\crit}(m)=1+\f{4}{m}$ .
\end{remark}

\begin{remark}
For the 1-D semilinear wave equation $\partial_t^2 v-\partial_x^2 v=|v|^p$ $(p>1)$, direct computation
shows that the local weak  solution $v$ will
generally blow up in finite time, see for example \cite{Gla3}. However, for the 1-D semilinear
Tricomi equation $\partial_t^2 u-t\partial_x^2 u =|u|^p$, by Theorem 1.1
we know that the global small data  weak solution $u$ exists  for \(p>5\),
which is established through getting the decay property of solutions to
the linear Tricomi equation (see \eqref{Y-1} below) and through deriving
some weighted Strichartz inequalities (see Theorem 2.1 and Theorem 3.2).
\end{remark}

\begin{remark}
For the 1-D linear wave equation $\partial_t^2 v-\partial_x^2 v=0$ with  $(v(0,x), \p_tv(0,x))=(\vp_0(x), 0)$,
it follows from D'Alembert's formula that $v(t,x)=\f12(\vp_0(x+t)+\vp_0(x-t))$. If $\vp_0(x)\in H^1(\R)$,
one then has that $v\in L_t^{\infty}([0, \infty), L_x^p(\R))$ for $1\le p\le\infty$ but
$v\not\in L_t^q([0, \infty), L_x^p(\R))$ for any $q$ satisfying $1\le q<\infty$, namely, there is no
global Strichartz-type inequality for the solution $v$ of 1-D wave equation.
However, it is not the case for the 1-D linear Tricomi equation (see Theorem 2.1 and Theorem 3.2).
\end{remark}

\begin{remark} In \cite{HWYin1}-\cite{HWYin3},
by the expression of the solution $w$ to the M-D linear equation $\partial_t^2 w-t\Delta w=F(t,x)$
with $(w(0,x), \p_tw(0,x))=(f, g)$, we have established
such a weighted Strichartz inequality $\left\|\big(\f{4}{9}t^3-|x|^2\big)^{\gamma_1}w\right\|_{L^q(\mathbb{R}^{1+n}_+)}
\leq C\big(\parallel f\parallel_{W^{\f{n}{2}+\f{1}{3}+\dl,1}(\mathbb{R}^n)}
+\parallel g\parallel_{W^{\f{n}{2}-\f{1}{3}+\dl,1}(\mathbb{R}^n)}
+\big\|\big(\f{4}{9}t^{3}-|x|^2\big)^{\gamma_2}F\big\|_{L^{\frac{q}{q-1}}(\mathbb{R}^{1+n}_+)}\big)$
for suitable {\bf positive} numbers $\gamma_1$, $\gamma_2$, $\dl$ and $q>1$. However, for the 1-D case of $\partial_t^2 w-t\p_x^2 w=F(t,x)$
with $(w(0,x), \p_tw(0,x))=(0, 0)$, we can derive such an inequality
$\left\|\big(\f{4}{9}t^3-|x|^2\Big)^{\mu_1}w\right\|_{L^q(\mathbb{R}^{1+1}_+)}\leq C\big\|\big(\f{4}{9}t^3-|x|^2\big)^{\mu_2}F\big\|_{L^{\frac{q}{q-1}}(\mathbb{R}^{1+1}_+)}$
for $q>1$ and suitable {\bf real} numbers $\mu_1$ and $\mu_2$ (here $\mu_1$ is {\bf negative}
and $\mu_2$ may be {\bf negative}, see Theorem 3.1 below).
\end{remark}

\begin{remark}
For the M-D semilinear generalized
Tricomi equation $\partial_t^2 u-t^m \Delta u=|u|^p$ with $\big(u(0,x),$ $\p_t u(0,x)\big)=\big(u_0(x), u_1(x)\big)$
and $x\in\Bbb R^n$ $(n\ge 2)$, in \cite{HWYin1}-\cite{HWYin3} we have shown that
there exists a critical exponent $p_{crit}(m, n)>1$ such that the  weak solution $u$ generally blows up
when $1 < p<p_{crit}(m, n)$ and meanwhile there exists a global small data weak  solution $u$
when $p>p_{crit}(m, n)$, where $p_{crit}(m,n)$ is the positive root of the algebraic equation
\begin{equation}\label{W-1}
((m+2)\frac{n}{2}-1)p^2+((m+2)(1-\frac{n}{2})-3)p-(m+2)=0.
\end{equation}
If we formally let $m=1$ and $n=1$ in \eqref{W-1} (actually only holds for $n\ge 2$), then
$p_{crit}(1, 1)=\f{3+\sqrt{33}}{2}$. It is easy to know $p_{crit}(1, 1)<5$,
which means that $p_{crit}(1, 1)$ is strictly less than $p_{crit}=5$ in
Theorem 1.1 and Theorem 1.2.
\end{remark}

The linear equation $\p_t^2u-t\p_x^2u=0$ is the well-known Tricomi equation
which arises from transonic gas dynamics (see \cite{Bers} and \cite{Mora}).
There are extensive results
for both linear and semilinear Tricomi equations in $n$ space dimensions
$(n\in\Bbb N)$. For
instances, with respect to the linear Tricomi equation  $\p_t^2u-t\Delta u=0$,
the authors in \cite{Bar}, \cite{Yag1} and \cite{Yag3} have computed its
fundamental solution explicitly; with respect to the semilinear Tricomi equation $\p_t^2u-t\Delta u=f(t,x,u)$,
under some certain assumptions on the function
$f(t,x,u)$, the authors in \cite{Gva} and \cite{Lup2}-\cite{Lup4}
have obtained a series of interesting  results on the existence and uniqueness of solution $u$
in bounded domains; with respect to the  Cauchy problem of semilinear Tricomi equations,
the authors in \cite{Beals} and \cite{Rua1, Rua3, Rua4} established the local existence as well
as the singularity structure of low regularity solutions in the
degenerate hyperbolic region and the elliptic-hyperbolic mixed region,
respectively. In addition, we have given a complete study
on the blowup or
global existence of small data solution $u$ to
the  semilinear Tricomi equation $\partial_t^2 u-t\Delta u=|u|^p$  for $n\ge 2$
(see \cite{HWYin1}-\cite{HWYin3}). In the present paper, we shall systematically
study the 1-D semilinear Tricomi equation $\partial_t^2 u-t\p_x^2 u=|u|^p$.

We now comment on the proof of  Theorem~\ref{thm:1.2} and Theorem~\ref{thm1.1}.
To prove the global existence in
Theorem~\ref{thm:1.2}, we require to establish some weighted Strichartz estimates for the
Tricomi operator $\p_t^2-t\partial_x^2$ as in \cite{HWYin2}.
In this process, a series of
inequalities are derived by applying an explicit formula for the solution $v$
of linear Tricomi equation $\p_t^2v-t\partial_x^2v=f(t,x)$ and by utilizing a basic observation from \cite{Gls} together with
some delicate analysis. Here we point out that since the stationary phase method for treating the
M-D problem in
\cite{HWYin2} and \cite{Ls} is not applicable for the 1-D case, we can not get a suitable \(L^1-L^\infty\) estimate of $v$ and
then use interpolation between  \(L^1-L^\infty\) estimate and  \(L^2-L^2\) estimate to get the Strichartz-type estimate of $v$
as in \cite{HWYin2}.
Based on the resulting Strichartz inequalities and the contraction mapping principle,
we complete the proof of Theorem~\ref{thm:1.2}.
To prove Theorem~\ref{thm1.1}, we define the
function $G(t)=\int_{\R}u(t,x)\ \md x$. By
an analysis similar to \cite{HWYin1}, and motivated by \cite{Yor},  we can derive a
Riccati-type ordinary differential inequality for $G(t)$ through a delicate
analysis of \eqref{equ:original}. From this and Lemma 2.1 in \cite{Yor},
the blowup result for \(1<p<p_{crit}\) in Theorem 1.2 is established under the positivity assumptions of $u_0(x)$ and $u_1(x)$.

This paper is organized as follows: In Section 2,
some weighted Strichartz estimates  for the linear homogeneous Tricomi equation
are established. In Section 3, for the linear inhomogeneous Tricomi equation,
the related weighted Strichartz estimates are derived. By applying the results in Section 2 and Setion 3, Theorem~\ref{thm:1.2} is proved
in Section 4. In Section 5,
we complete the proof of Theorem 1.2.

\section{Mixed-norm estimate for homogeneous equation}

In order to establish  the global existence of  weak solution
$u$ to problem \eqref{equ:original},
we shall derive some mixed space-time norm estimates for the corresponding
linear problem.

At first, we consider the following homogeneous problem
\begin{equation}
\begin{cases}
&\partial_t^2 v-t\partial_x^2 v=0 \qquad \text{in} \quad \R_+^{1+1},\\
&v(0,x)=f(x),\quad \partial_tv(0,x)=g(x), \\
\end{cases}
\label{equ:3.1}
\end{equation}
where $f, g\in C_0^\infty(\mathbb{R})$,  $\operatorname{supp}(f,g)\subseteq \{x: |x|\leq M\}$ for some fixed constant
$M>1$. We now derive a weighted space-time estimate of Strichartz-type for the solution $v$.
\begin{theorem}\label{thm:3.1}
For the solution $v$ of \eqref{equ:3.1}, one then has
\begin{equation}\label{equ:3.2}
\left\|\Big(\big(\phi(t)+M\big)^2-|x|^2\Big)^\gamma v\right\|_{L^q(\mathbb{R}^{1+1}_+)}\leq C(\parallel f\parallel_{W^{\f{5}{6}+\delta,1}(\mathbb{R})}+\parallel g\parallel_{W^{\f{1}{6}+\delta,1}(\mathbb{R})}),
\end{equation}
where $\phi(t)=\frac{2}{3}t^{\frac{3}{2}}$, \(q=1+p\), \(p>p_{\crit}\),  $\gamma<\f{1}{6}-\f{5}{6q}$,
$0<\delta<\f{1}{6}-\gamma-\f{5}{6q}$,
and $C$ is a positive constant depending only on $q$, $\gamma$ and $\delta$.
\end{theorem}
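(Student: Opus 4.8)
The plan is to start from the explicit representation formula for the solution of the homogeneous Tricomi equation. This is the crucial first ingredient: the fundamental solution of $\partial_t^2 - t\partial_x^2$ is known explicitly (computed in Barros-Neto–Gelfand, Yagdjian, etc.), and it involves Gauss hypergeometric functions evaluated at an argument built from the characteristic combination $\phi(t)^2 - |x|^2 = \frac{4}{9}t^3 - x^2$. So I would write $v(t,x)$ as a sum of two terms, one acting on $f$ and one on $g$, each of the form of a convolution/superposition over the initial data against a kernel supported in the backward "Tricomi cone" $\{|x-y| \le \phi(t)\}$. The key structural fact I want to extract is the pointwise decay of this kernel in terms of the weight $(\phi(t)^2 - |x|^2)$, together with the Hölder-type regularity it demands on $f$ and $g$ — which is exactly why the fractional Sobolev orders $W^{5/6+\delta,1}$ and $W^{1/6+\delta,1}$ (the shifts $\pm\frac{1}{3}$ around $\frac{1}{2}$ matching the $n=1$ case of the general $\frac{n}{2}\pm\frac{1}{3}$ seen in Remark 1.4) appear.

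Next I would reduce matters to a single profile. Because the data are compactly supported in $\{|x|\le M\}$ and the finite propagation speed of the Tricomi operator confines $v(t,\cdot)$ to $\{|x|\le \phi(t)+M\}$, the weight $\big((\phi(t)+M)^2 - |x|^2\big)^\gamma$ is finite and positive on the support of $v$, and comparable up to constants to a shifted version of the natural weight $(\phi(t)^2-|x|^2)^\gamma$ away from the light cone boundary. I would then invoke the basic observation from Glassey (\cite{Gls}) referenced in the introduction: after changing to the characteristic-type variables adapted to $\phi(t)$, the representation collapses to a form where the weighted $L^q$ norm in $(t,x)$ can be estimated by a scalar one-dimensional integral against the (fractional-derivative-regularized) data. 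Concretely, I expect to bound
\begin{equation*}
\left\|\big((\phi(t)+M)^2-|x|^2\big)^\gamma v\right\|_{L^q}
\le C \int_{\mathbb{R}} \Big( |D^{5/6+\delta} f(y)| + |D^{1/6+\delta} g(y)| \Big)\, \md y,
\end{equation*}
by first freezing the data point $y$, computing the $L^q_{t,x}$ norm of the weighted kernel, and checking that this inner norm is \emph{finite and uniformly bounded in $y$} precisely under the stated constraints on $\gamma$, $\delta$, and $q=p+1>6$.

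The heart of the matter — and **the hard part will be** — verifying the convergence of that kernel integral, i.e. showing the weighted $L^q_{t,x}$ norm of the fundamental-solution kernel (for fixed source point) is finite. This is where the algebra of exponents has to close: the kernel has an integrable-looking singularity as one approaches the cone boundary $\phi(t)^2 = |x|^2$ (where the hypergeometric argument degenerates), and simultaneously one needs integrability at spatial infinity along $t\to\infty$. The factor $(\phi(t)^2-|x|^2)^\gamma$ in the numerator competes against the kernel's own negative power of the same quantity, and the $\delta$-loss in the Sobolev order is exactly the slack that pays for borderline divergence. I would set up the computation in the variables $\xi = \phi(t)+x$, $\eta = \phi(t)-x$ (so the weight becomes $(\xi\eta)^\gamma$ after the shift), change variables from $t$ to $\phi$ with Jacobian $\md t = \phi^{-1/3}\,c\,\md\phi$ — this $\phi^{-1/3}$ power is the reason a net exponent shift of $1/3$ enters — and then the finiteness reduces to a pair of one-dimensional Beta-type integrals whose convergence is governed by the two inequalities $\gamma<\frac{1}{6}-\frac{5}{6q}$ and $0<\delta<\frac{1}{6}-\gamma-\frac{5}{6q}$. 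I would track the exponents carefully there, since that bookkeeping is what pins down the critical value $p_{\crit}=5$ (equivalently $q>6$), and I would finish by passing from the $D^s$ bounds back to the $W^{s,1}$ norms via the definition of the fractional derivative and the compact support of the data.
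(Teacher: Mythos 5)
Your route is not the paper's: for the homogeneous problem the paper never touches the physical-space fundamental solution. It uses Yagdjian's Fourier representation $v=V_1(t,D_x)f+V_2(t,D_x)g$, whose confluent-hypergeometric symbols decay like $(1+\phi(t)|\xi|)^{-1/6}$, performs a Littlewood--Paley decomposition, and invokes the oscillatory kernel bound of Lindblad--Sogge \cite{Ls} (estimate \eqref{equ:3.12}, applied with $N=\frac56\pm\delta$ according to the sign of the dyadic index) to produce the pointwise decay \eqref{Y-1}, which is then integrated against the weight. More importantly, your plan has a genuine gap at exactly the step you call the heart of the matter. In one dimension the kernel of the $f$-component of the solution operator is, in Euler--Poisson--Darboux form, $c\,\phi(t)^{-1}\bigl(1-|x-y|^2/\phi(t)^2\bigr)_+^{-5/6}$, so its singularity across the characteristic cone is $\bigl(\phi(t)-|x-y|\bigr)^{-5/6}$; raised to the power $q=p+1>6$ this is severely non-integrable in $x$ (exponent $-5q/6<-5$), and the weight cannot rescue it, because $\bigl((\phi(t)+M)^2-|x|^2\bigr)^{\gamma}$ vanishes on $|x|=\phi(t)+M$ and not on the kernel's singular set $|x-y|=\phi(t)$ (for $|y|<M$ these sets are disjoint), so near the singularity the weight is bounded below and there is no competition of exponents at all. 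Hence the frozen-$y$ weighted $L^q_{t,x}$ norm of the raw kernel is infinite for every admissible $\gamma$; the divergence is nowhere near borderline, and no $\delta$-slack fixes it.

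The only repair is the step you defer to the end: shift the full $|D|^{5/6+\delta}$ (resp.\ $|D|^{1/6+\delta}$) onto the data \emph{before} any $L^q$ computation. But after this regularization the kernel is no longer cone-supported or singular, and the two bounds you then need --- the dispersive factor $(1+\phi(t))^{-1/6}$ and the off-cone decay $\bigl(1+\bigl||x|-\phi(t)\bigr|\bigr)^{-5/6+\delta}$ --- are facts about oscillatory integrals, not Beta-type integrals: they come from non-stationary phase in $\xi$ for the phases $x\xi\pm\phi(t)|\xi|$ carried out on each dyadic block, which is precisely the content of \eqref{equ:3.12}--\eqref{equ:3.17}. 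If instead one regularizes globally and estimates by absolute values, as your Beta-type bookkeeping would, the low and intermediate frequencies alone force the regularized kernel to be of size $\phi(t)^{-1/6+\delta}$ throughout the interior of the cone, with no further interior decay; Minkowski's inequality then yields \eqref{equ:3.2} only in the range $\gamma<\frac1{12}-\frac{\delta}{2}-\frac5{6q}$, which contains no positive $\gamma$ once $q$ is close to $6$ (i.e.\ $p$ close to $p_{\crit}=5$), so neither the theorem as stated nor its use in Section 4 (which requires $(p-1)\gamma+\frac16>\frac5{3(p+1)}$) would follow. Two smaller points: \cite{Gls} is Georgiev--Lindblad--Sogge, not Glassey, and the paper invokes it only for the inhomogeneous estimate of Section 3; and the shifts $\pm\frac13$ in the Sobolev orders come from the symbol decay $(1+\phi(t)|\xi|)^{-1/6}$ together with the extra factor $t$ in $V_2$, not from the Jacobian $\md t\sim\phi^{-1/3}\,\md\phi$.
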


\begin{proof}
It follows from \cite{Yag2} that the solution $v$ of \eqref{equ:3.1} can be expressed as
\[v(t,x)=V_1(t, D_x)f(x)+V_2(t, D_x)g(x),\]
where the symbols $V_j(t, \xi)$ ($j=1,2$) of the Fourier integral operators $V_j(t, D_x)$  are
\begin{equation}\label{equ:3.3}
\begin{split}
V_1(t,|\xi|)=&\frac{\Gamma(\frac{1}{3})}{\Gamma(\f{1}{6})}\biggl[e^{\frac{z}{2}}H_+\Big(\f{1}{6},\frac{1}{3};z\Big) +e^{-\frac{z}{2}}H_-\Big(\f{1}{6},\frac{1}{3};z\Big)\biggr]
\end{split}
\end{equation}
and
\begin{equation}
\begin{split}
V_2(t,|\xi|)=&\frac{\Gamma(\frac{5}{3})}{\Gamma(\frac{5}{6})}t\biggl[
e^{\frac{z}{2}}H_+\Big(\frac{5}{6},\frac{5}{3};z\Big)
+e^{-\frac{z}{2}}H_-\Big(\frac{5}{6},\frac{5}{3};z\Big)\biggr],
\end{split}
\label{equ:3.4}
\end{equation}
here $z=2i\phi(t)|\xi|$, $\xi\in\Bbb R$, $i=\sqrt{-1}$, and $H_{\pm}$ are smooth functions of the variable $z$.
By \cite{Tani}, one knows that for $\beta\in\mathbb{N}_0$,
\begin{align}
\big| \partial_\xi^\beta H_{+}(\alpha,\gamma;z)
\big|&\leq C(\phi(t)|\xi|)^{\alpha-\gamma}(1+|\xi|^2)^{-\frac{|\beta|}{2}}
\quad if \quad \phi(t)|\xi|\geq 1, \label{equ:3.5} \\
\big| \partial_\xi^\beta H_{-}(\alpha,\gamma;z)\big|&\leq C(\phi(t)|\xi|)^{-\alpha}(1+|\xi|^2)^{-\frac{|\beta|}{2}}
\quad if \quad \phi(t)|\xi|\geq 1. \label{equ:3.6}
\end{align}

To estimate $v$, it  only suffices to deal with $V_1(t, D_x)f(x)$  since the treatment
on $V_2(t, D_x)g(x)$ is similar. Indeed, if one just notices a simple fact of $t\phi(t)^{-\frac{5}{6}}=
C_1\phi(t)^{-\frac{1}{6}}$, it then follows from the expressions
of $V_1(t,\xi)$ and $V_2(t,\xi)$ that the orders of $t$ in $V_1(t,\xi)$ and $V_2(t,\xi)$ are the same.
Choose a cut-off function $\chi(s)\in C^{\infty}(\Bbb R)$
with $\chi(s)=
\left\{ \enspace
\begin{aligned}
1, \quad &s\geq2 \\
0, \quad &s\leq1
\end{aligned}
\right.$. Then
\begin{equation}
\begin{split}
V_1(t,|\xi|)\hat{f}(\xi)&=\chi(\phi(t)|\xi|)V_1(t,|\xi|)\hat{f}(\xi)+(1-\chi(\phi(t)|\xi|))V_1(t,|\xi|)\hat{f}(\xi) \\
&=:\hat{v}_1(t,\xi)+\hat{v}_2(t,\xi).
\end{split}
\label{equ:3.7}
\end{equation}
Together with \eqref{equ:3.3}, \eqref{equ:3.5} and \eqref{equ:3.6}, we derive that
\begin{equation}
{v}_1(t,x)=C\biggl(\int_{\mathbb{R}^n}e^{i(x\cdot\xi+\phi(t)|\xi|)}a_{11}(t,\xi)\hat{f}(\xi)\md\xi+
\int_{\mathbb{R}}e^{i(x\cdot\xi-\phi(t)|\xi|)}a_{12}(t,\xi)\hat{f}(\xi)\md\xi\biggr), \label{equ:3.8}
\end{equation}
where $C>0$ is a generic constant, and for $\beta\in\mathbb{N}_0$,
\begin{equation*}
\big| \partial_\xi^\beta a_{1l}(t,\xi)\big|\leq C_{l\beta}|\xi|^{-|\beta|}\big(1+\phi(t)|\xi|\big)^{-\frac{1}{6}},
\qquad l=1,2.
\end{equation*}

Next we analyze $v_2(t,x)$. It follows from \cite{Erd1} or \cite{Yag2} that
\begin{equation*}
V_1(t,|\xi|)=e^{-\frac{z}{2}}\Phi\Big(\frac{1}{6},\frac{1}{3};z\Big), 
\end{equation*}
where $\Phi$ is the confluent hypergeometric function which is analytic with respect to the variable
$z=2i\phi(t)|\xi|$. Then
\begin{equation*}
\Big|\partial_\xi\big\{\big(1-\chi(\phi(t)|\xi|)\big)V_1(t,|\xi|)\big\}\Big|\leq C(1+\phi(t)|\xi|)^{-\frac{1}{6}}|\xi|^{-1}.
\end{equation*}
Similarly, one has
\begin{equation*}
\Big|\partial_\xi^{\beta}\big\{\big(1-\chi(\phi(t)|\xi|)\big)V_1(t,|\xi|)\big\}\Big|\leq
C(1+\phi(t)|\xi|)^{-\frac{1}{6}}|\xi|^{-|\beta|}.
\end{equation*}
Thus we arrive at
\begin{equation}
v_2(t,x)=C\biggl(\int_{\mathbb{R}}e^{i(x\cdot\xi+\phi(t)|\xi|)}a_{21}(t,\xi)\hat{f}(\xi)\md\xi
+\int_{\mathbb{R}}e^{i(x\cdot\xi-\phi(t)|\xi|)}a_{22}(t,\xi)\hat{f}(\xi)\md\xi\biggr), \label{equ:3.9}
\end{equation}
where, for $\beta\in\mathbb{N}_0$,
\begin{equation*}
\big| \partial_\xi^\beta a_{2l}(t,\xi)\big|\leq C_{l\beta}\big(1+\phi(t)|\xi|\big)^{-\frac{1}{6}}|\xi|^{-|\beta|},
\qquad l=1,2.
\end{equation*}

Substituting \eqref{equ:3.8} and \eqref{equ:3.9} into \eqref{equ:3.7} yields

\[V_1(t, D_x)f(x)=C_1\biggl(\int_{\mathbb{R}}e^{i(x\xi+\phi(t)|\xi|)}a_1(t,\xi)\hat{f}(\xi)\md\xi
+\int_{\mathbb{R}}e^{i(x\xi-\phi(t)|\xi|)}a_2(t,\xi)\hat{f}(\xi)\md\xi\biggr),\]
where $a_l$ $(l=1,2)$ satisfies
\begin{equation}
|\partial_\xi^\beta a_l(t,\xi)\big|\leq C_{l\beta}\big(1+\phi(t)|\xi|\big)^{-\frac{1}{6}}|\xi|^{-|\beta|}. \label{equ:3.10}
\end{equation}

To estimate $V_1(t, D_x)f(x)$, it only suffices to deal with $\int_{\mathbb{R}}e^{i(x\xi+\phi(t)|\xi|)}a_1(t,\xi)\hat{f}(\xi)\md\xi$
since the term $\int_{\mathbb{R}}e^{i(x\xi-\phi(t)|\xi|)}a_2(t,\xi)\hat{f}(\xi)\md\xi$ can be  analogously treated.
Set
\begin{equation*}
(Af)(t,x)=:\int_{\mathbb{R}}e^{i(x\xi+\phi(t)|\xi|)}a_1(t,\xi)\hat{f}(\xi)\md\xi. 
\end{equation*}

Let $\beta(\tau)\in C_0^\infty(\frac{1}{2},2)$ such that
\begin{equation}
\sum\limits_{j=-\infty}^\infty\beta(\frac{\tau}{2^j}) \equiv1\quad \text{for $\tau\in\mathbb{R}_+$.} \label{equ:3.11}
\end{equation}

To estimate $(Af)(t,x)$, we now study its corresponding dyadic operators
\begin{equation*}
\begin{split}
(A_jf)(t,x)&=\int_{\mathbb{R}}e^{i(x\xi+\phi(t)|\xi|)}\beta(\frac{|\xi|}{2^j})
a_1(t,\xi)\hat{f}(\xi)\md\xi \\
&=:\int_{\mathbb{R}}e^{i(x\xi+\phi(t)|\xi|)}a_j(t,\xi)\hat{f}(\xi)\md\xi, \\
\end{split}
\end{equation*}
where $j\in\Bbb Z$. Note that the kernel of operator $A_j$ is
\begin{equation*}
K_j(t,x;y)=\int_{\mathbb{R}}e^{i((x-y)\xi+\phi(t)|\xi|)}a_j(t,\xi)\md\xi,
\end{equation*}
where $|y|\le M$ because of $\operatorname{supp} f\subseteq \{x: |x|\leq M\}$. By (3.29) of \cite{Ls}, we have that for any $N\in\Bbb {\Bbb R^+}$,
\begin{equation}\label{equ:3.12}
\begin{split}
|K_j(t,x;y)|\leq &C\lambda_j(1+\phi(t)\lambda_j)^{-\frac{1}{6}}\big(1+\lambda_j\big||x-y|-\phi(t)\big|\big)^{-N},
\end{split}
\end{equation}
where $\lambda_j=2^j$. Since the solution $v$ of \eqref{equ:3.1} is smooth and has compact support on the variable
$x$ for any fixed time, one easily knows that \eqref{equ:3.2} holds in any fixed domain $[0, T]\times\Bbb R$.
Therefore, in order to prove \eqref{equ:3.2}, it suffices to consider the case of $\phi(t)\gg M$.
At this time,
the following two cases will be studied separately.

\subsection{\boldmath$\big||x-y|-\phi(t)\big|\gg M$}\label{sec2:big}

For this case, there exist two positive constants $C_1$ and $C_2$ such that
\[C_1\big||x-y|-\phi(t)\big|\geq\big||x|-\phi(t)\big|\geq C_2\big||x-y|-\phi(t)\big|\gg M.\]
If $j\geq0$, we then take $N=\frac{5}{6}+\delta$ in \eqref{equ:3.12} and obtain
\begin{equation*}
\begin{split}
|K_j(t,x;y)|&\leq C_{\delta}\lambda_j^{1-\frac{1}{6}}
\phi(t)^{-\frac{1}{6}}\lambda_j^{-\frac{5}{6}-\delta}
\big||x|-\phi(t)\big|^{-\frac{5}{6}-\delta} \\
&\leq C_{\delta}\lambda_j^{-\delta}\big(1+\phi(t)\big)^{
-\frac{1}{6}}\big(1+\big||x|-\phi(t)\big|\big)^{-\frac{5}{6}-\delta}.
\end{split}
\end{equation*}
For $j<0$, taking $N=\frac{5}{6}-\delta$ in \eqref{equ:3.12}  we arrive at
\begin{equation*}
\begin{split}
|K_j(t,x;y)|&\leq C_{\delta}\lambda_j^{1-\frac{1}{6}}
\phi(t)^{-\frac{1}{6}}\lambda_j^{-\frac{5}{6}+\delta}
\big||x|-\phi(t)\big|^{-\frac{5}{6}+\delta} \\
&\leq C_{\delta}\lambda_j^\delta\big(1+\phi(t)\big)^{-\frac{1}{6}}\big(1+\big||x|-\phi(t)\big|\big)^{-\frac{1}{6}+\delta}.
\end{split}
\end{equation*}
It follows from $f(x)\in C_0^\infty(\mathbb{R})$ and direct computation that
\begin{equation} \label{equ:3.13}
|A_jf|\leq
\left\{ \enspace
\begin{aligned}
&C_{\delta}\lambda_j^\delta\big(1+\phi(t)\big)^{-\frac{1}{6}}
\big(1+\big||x|-\phi(t)\big|\big)^{-\frac{5}{6}+\delta}\|f\|_{L^1(\Bbb R)}, &&j<0,\\
&C_{\delta}\lambda_j^{-\delta}\big(1+\phi(t)\big)^{-\frac{1}{6}}
\big(1+\big||x|-\phi(t)\big|\big)^{-\frac{5}{6}-\delta}\|f\|_{L^1(\Bbb R)}, &&j\geq0.
\end{aligned}
\right.
\end{equation}

Summing the right sides of \eqref{equ:3.13}, we get that for large $\phi(t)$ and $\big||x|-\phi(t)\big|$,
\begin{equation}\label{equ:3.14}
|V_1(t,D_x)f|\leq C_{\delta}\big(1+\phi(t)\big)^{-\frac{1}{6}}
\big(1+\big||x|-\phi(t)\big|\big)^{-\frac{5}{6}+\delta}\|f\|_{L^1(\Bbb R)}.
\end{equation}
Analogously, we have
\begin{equation*}\label{equ:3.14.1}
|V_2(t,D_x)g|\leq C_{\delta}\big(1+\phi(t)\big)^{-\frac{1}{6}}
\big(1+\big||x|-\phi(t)\big|\big)^{-\frac{1}{6}+\delta}\|g\|_{L^1(\Bbb R)}.
\end{equation*}

Therefore,
\begin{equation}\label{Y-0}
|v|\leq C_{\delta}\big(1+\phi(t)\big)^{-\frac{1}{6}}
\big(1+\big||x|-\phi(t)\big|\big)^{-\frac{1}{6}+\delta}(\|f\|_{L^1(\Bbb R)}+\|g\|_{L^1(\Bbb R)}).
\end{equation}

\subsection{\boldmath$\big||x-y|-\phi(t)\big|\le C M$}\label{sec2:small}

By the similar method as in {\bf 2.1}, we can establish that for $t>1$,
\begin{equation}\label{equ:3.15}
\parallel v(t,\cdot)\parallel_{L^\infty(\mathbb{R})}\leq C_{\delta}\phi(t)^{-\frac{1}{6}}\left(\parallel f\parallel_{W^{\frac{5}{6}+\delta,1}(\mathbb{R})}
+\parallel g\parallel_{W^{\frac{1}{6}+\delta,1}(\mathbb{R})}\right),
\end{equation}
where $0<\delta<\frac{5}{6}-\gamma-\frac{1}{q}$ is a constant.

Indeed, note that
\begin{equation*}
\begin{split}
|A_jf|&=\bigg|\int_{\mathbb{R}}e^{i(x\cdot\xi+\phi(t)|\xi|)}\frac{a_j(t,\xi)}{|\xi|^\alpha}\widehat{|D_x|^\alpha f}(\xi)\md\xi\bigg|,
\end{split}
\end{equation*}
where $\alpha=\frac{5}{6}+\delta$. Then by direct computation, we have
that for $j\ge 0$,
\begin{equation}\label{equ:3.16}
\begin{split}
|A_jf|&\leq C_{\delta}\lambda_j^{-\alpha}\lambda_j
\big(1+\phi(t)\lambda_j\big)^{-\frac{1}{6}}
\parallel f\parallel_{W^{\frac{5}{6}+\delta,1}(\mathbb{R})} \\
&\leq C_{\delta}\lambda_j^{-\delta}\big(1+\phi(t)\big)^{-\frac{1}{6}}
\parallel f\parallel_{W^{\frac{5}{6}+\delta,1}(\mathbb{R})}.
\end{split}
\end{equation}
Similarly, for $j<0$, we have
\begin{equation}\label{equ:3.17}
|A_jf|\leq C_{\delta}\lambda_j^\delta\big(1+\phi(t)\big)^{-\frac{1}{6}}
\|f\|_{W^{\frac{5}{6}-\delta,1}(\mathbb{R})}.
\end{equation}

Summing all the terms in \eqref{equ:3.16} and \eqref{equ:3.17} yields \eqref{equ:3.15} for $g=0$.
Note that $\big||x-y|-\phi(t)\big|\le C M$ and \(\operatorname{supp}f\subseteq[-M,M]\), we then have
\[\big||x|-\phi(t)\big|\le C M.\]
This together with \eqref{equ:3.15}  for $g=0$ yields
\begin{equation}\label{equ:3.20}
  |V_1(t,D_x)f|\leq C_{\delta}\big(1+\phi(t)\big)^{-\frac{1}{6}}
\big(1+\big||x|-\phi(t)\big|\big)^{-\frac{5}{6}+\delta}\|f\|_{W^{\frac{5}{6}+\delta,1}(\mathbb{R})}.
\end{equation}

Analogously, we have
\begin{equation}\label{equ:3.20.1}
  |V_2(t,D_x)g|\leq C_{\delta}\big(1+\phi(t)\big)^{-\frac{1}{6}}
\big(1+\big||x|-\phi(t)\big|\big)^{-\frac{1}{6}+\delta}\|g\|_{W^{\frac{1}{6}+\delta,1}(\mathbb{R})}.
\end{equation}
Therefore, it follows from \eqref{equ:3.20} and \eqref{equ:3.20.1} that
\begin{equation}\label{equ:3.18}
|v|\leq C_{\delta}(1+\phi(t))^{-\frac{1}{6}}(1+\big||x|-\phi(t)\big|)^{-\frac{1}{6}+\delta}
\left(\parallel f\parallel_{W^{\frac{5}{6}+\delta,1}(\mathbb{R})}
+\parallel g\parallel_{W^{\frac{1}{6}+\delta,1}(\mathbb{R})}\right).
\end{equation}

Combining \eqref{equ:3.15} with \eqref{equ:3.18} and noting the compact supports of $f,g$, we have
\begin{equation}\label{Y-1}
|v|\leq C_{\delta}(1+\phi(t))^{-\frac{1}{6}}(1+\big||x|-\phi(t)\big|)^{-\frac{1}{6}+\delta}
\left(\parallel f\parallel_{W^{\frac{5}{6}+\delta,1}(\mathbb{R})}
+\parallel g\parallel_{W^{\frac{1}{6}+\delta,1}(\mathbb{R})}\right).
\end{equation}

Next we derive \eqref{equ:3.2} from \eqref{Y-1}. Set
\[R=\parallel f\parallel_{W^{\frac{5}{6}+\delta,1}(\mathbb{R})}
+\parallel g\parallel_{W^{\frac{1}{6}+\delta,1}(\mathbb{R})}.\]
Then
\begin{equation}\label{equ:3.19}
\begin{split}
&\Big\|\big((\phi(t)+M)^2-|x|^2\big)^\gamma v\Big\|_{L^q(\mathbb{R}^{1+1}_+)}^q \\
&\le C_{\delta}R\int_0^\infty\int_{\mathbb{R}}
\bigg(\Big(\big(\phi(t)+M\big)^2-|x|^2\Big)^\gamma\big(1+\phi(t)\big)^{-\frac{1}{6}}\Big(1+\big||x|-\phi(t)\big|\Big)^{-\frac{1}{6}+\delta}\bigg)^q \md x\md t
\\
&\leq C_{\delta}R
\int_0^\infty\int_0^\infty\Big(\big(\phi(t)+M+r\big)^\gamma\big(\phi(t)+M-r\big)^\gamma \\
&\qquad\qquad \qquad  \qquad  \qquad  \times\big(1+\phi(t)\big)^{-\frac{1}{6}}
\big(1+|r-\phi(t)|\big)^{-\frac{1}{6}+\delta}\Big)^q \md r\md t
\\
&\leq C_{m,\delta}R
\int_0^\infty\int_0^\infty\Big(\big(1+\phi(t)\big)^{-\frac{1}{6}+\gamma}\big(1+|r-\phi(t)|\big)^{\gamma-\frac{1}{6}+\delta}\Big)^q
\md r\md t \\
&\leq C_{m,\delta}R
\int_0^\infty\big(1+\phi(t)\big)^{\big(-\frac{1}{3}+2\gamma+\delta\big)q+1}\md t.
\end{split}
\end{equation}
Notice that by our assumption, $\gamma-\frac{1}{6}<-\frac{5}{6q}$ holds.
Then we can choose a constant $\dl>0$ such that
\[\Big(\big(-\frac{1}{3}+2\gamma+\delta\big)q+1\Big)\frac{3}{2}<-1.\]
Hence, for some positive constant $\sigma>0$, the integral in the last line of \eqref{equ:3.19} can be controlled by
\begin{equation*}
\begin{split}
&\int_0^\infty\big(1+\phi(t)\big)^{\big(-\frac{1}{3}+2\gamma+\delta\big)q+1}\md t \\
&\leq C\int_0^\infty(1+t)^{-1-\sigma}\md t\\
&\leq C.
\end{split}
\end{equation*}
This, together with \eqref{equ:3.19}, yields \eqref{equ:3.2}. Namely, we complete the proof of Theorem 2.1.
\end{proof}

\section{Mixed-norm estimate for inhomogeneous equation}

In this section we turn to the inhomogeneous Tricomi equation:
\begin{equation}
\begin{cases}
&\partial_t^2 w-t\partial_x^2 w=F(t,x), \quad \text{in} \quad \R_+^{1+1},\\
&w(0,x)=0,\quad \partial_tw(0,x)=0.
\end{cases}
\label{equ:4.1}
\end{equation}

Since the stationary phase method is not applicable
for the solution $w$ in the case of
\(n=1\), we can not get a suitable \(L^1-L^\infty\) estimate and then use interpolation to get the Strichartz-type estimate
as in \cite{HWYin2}. To overcome this difficulty, we shall cite a conclusion from \cite{Gls} and subsequently
use the representation formula of $w$ to establish the space-time mixed norm estimate by delicate analysis.

\begin{lemma}{\bf(see (1.16) of \cite{Gls})}\label{lem:3.1}
If
\[f(u)=\int_0^u\frac{g(\xi)}{|u-\xi|^\delta|\xi|^\beta|u|^{\alpha}}d\xi,\]
then
\begin{equation}
\|f\|_{L^q((0,\infty))}\leq C\|g\|_{L^r((0,\infty))},
\end{equation}
where
\[1<r<q<\infty, \alpha+\beta+\delta=1-\left(\frac{1}{r}-\frac{1}{q}\right), \alpha+\beta\geq0, \quad and \quad \alpha+\delta>\frac{1}{q}.\]
\end{lemma}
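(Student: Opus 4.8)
The plan is to strip the homogeneity off the kernel by a logarithmic change of variables, which recasts the map $g\mapsto f$ as a genuine convolution on $\mathbb{R}$, and then to apply Young's inequality — in its classical form away from the endpoint, and in its weak-type (Hardy--Littlewood--Sobolev) form at the endpoint. Since the integration runs over $0<\xi<u$, every factor is positive and the absolute values may be dropped throughout.

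First I would set $u=e^x$ and $\xi=e^y$ with $y<x$, write $e^x-e^y=e^x\big(1-e^{y-x}\big)$, and substitute $t=x-y>0$. A direct computation then yields
\[
e^{x/q}f(e^x)=e^{cx}\int_0^\infty\big(1-e^{-t}\big)^{-\delta}\,e^{-(1-\beta-\frac{1}{r})t}\,\Big(e^{(x-t)/r}g(e^{x-t})\Big)\,\md t,
\]
where $c=\frac{1}{q}-\alpha-\delta+1-\beta-\frac{1}{r}=1+\frac{1}{q}-\frac{1}{r}-(\alpha+\beta+\delta)$. The scaling hypothesis $\alpha+\beta+\delta=1-\big(\frac{1}{r}-\frac{1}{q}\big)$ is precisely the statement $c=0$. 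Introducing the weighted functions $F(x)=e^{x/q}f(e^x)$ and $H(y)=e^{y/r}g(e^y)$, which satisfy $\|F\|_{L^q(\mathbb{R})}=\|f\|_{L^q((0,\infty))}$ and $\|H\|_{L^r(\mathbb{R})}=\|g\|_{L^r((0,\infty))}$, the identity becomes $F=K*H$ with the one-sided kernel $K(t)=\big(1-e^{-t}\big)^{-\delta}e^{-(1-\beta-\frac{1}{r})t}$ for $t>0$ and $K(t)=0$ for $t<0$. The desired estimate is thus equivalent to the convolution bound $\|K*H\|_{L^q}\le C\|H\|_{L^r}$.

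I would then read off the admissible Young exponent $\frac{1}{m}=1+\frac{1}{q}-\frac{1}{r}=\alpha+\beta+\delta$, which lies in $(0,1)$ since $1<r<q<\infty$, so $1<m<\infty$. The size of $K$ is governed by its two ends. As $t\to0^+$ one has $1-e^{-t}\sim t$, hence $K(t)\sim t^{-\delta}$, which is $m$-th power integrable near $0$ exactly when $\delta m<1$, i.e. when $\alpha+\beta>0$. As $t\to+\infty$, $K(t)\sim e^{-(1-\beta-\frac{1}{r})t}$ decays exponentially exactly when $1-\beta-\frac{1}{r}>0$; and the hypothesis $\alpha+\delta>\frac{1}{q}$ together with the scaling identity gives $\beta<1-\frac{1}{r}$, which is exactly this condition. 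Consequently $K\in L^m(\mathbb{R})$ whenever $\alpha+\beta>0$, and ordinary Young's inequality $\|K*H\|_{L^q}\le\|K\|_{L^m}\|H\|_{L^r}$ closes the argument in that regime.

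The delicate point, and the main obstacle, is the endpoint $\alpha+\beta=0$: there $\frac{1}{m}=\delta$, so $K(t)\sim t^{-1/m}$ near the origin and $K$ lies only in weak-$L^m$, not in $L^m$, whence ordinary Young fails. Here I would invoke the weak-type Young (equivalently Hardy--Littlewood--Sobolev) inequality $\|K*H\|_{L^q}\le C\|K\|_{L^{m,\infty}}\|H\|_{L^r}$, valid precisely for $1<r<q<\infty$ with $\frac{1}{q}+1=\frac{1}{m}+\frac{1}{r}$. This is the step that forces the strict inequality $r<q$; it is proved by the standard decomposition of the weak-$L^m$ kernel into its parts above and below a height threshold, applying Young's inequality to each with suitable exponents, optimizing the threshold to produce a weak-type $(r,q)$ bound, and upgrading via Marcinkiewicz interpolation. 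One may alternatively recognize the whole estimate as the one-dimensional Stein--Weiss inequality restricted to the half-line, but the convolution reduction above is more transparent and self-contained.
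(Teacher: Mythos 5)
Your proof is correct, and it is worth noting at the outset that the paper itself offers no argument for this lemma at all: the statement is imported verbatim as inequality (1.16) of Georgiev--Lindblad--Sogge \cite{Gls}, so what you have written is a self-contained proof of an ingredient the paper treats as a black box. Your reduction checks out: with $u=e^x$, $\xi=e^y$, $t=x-y$, the prefactor exponent is indeed $c=1+\frac1q-\frac1r-(\alpha+\beta+\delta)$, which vanishes exactly under the scaling hypothesis, the norm identities $\|F\|_{L^q(\mathbb{R})}=\|f\|_{L^q((0,\infty))}$ and $\|H\|_{L^r(\mathbb{R})}=\|g\|_{L^r((0,\infty))}$ hold, and the problem becomes $\|K*H\|_{L^q(\mathbb{R})}\le C\|H\|_{L^r(\mathbb{R})}$ for the one-sided kernel $K(t)=(1-e^{-t})^{-\delta}e^{-(1-\beta-\frac1r)t}$, $t>0$. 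Your exponent bookkeeping is also right: with $\frac1m=1+\frac1q-\frac1r=\alpha+\beta+\delta\in(0,1)$ one has $\delta m<1\Leftrightarrow\alpha+\beta>0$, the hypothesis $\alpha+\delta>\frac1q$ is exactly equivalent to $\beta<1-\frac1r$ (exponential decay of $K$ at infinity), and since $\alpha+\beta\ge0$ forces $\delta\le\alpha+\beta+\delta<1$, the integral defining $f$ is locally convergent, so no hidden degeneracy arises. The case split is placed exactly where it must be: classical Young when $\alpha+\beta>0$ (then $K\in L^m$), and the weak-type Young/Hardy--Littlewood--Sobolev inequality at the endpoint $\alpha+\beta=0$ (then $K(t)\le Ct^{-1/m}$ globally, so $K\in L^{m,\infty}$ only), which is legitimate precisely because $1<r<q<\infty$ strictly. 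What your route buys, beyond making the paper self-contained, is transparency: the scaling condition is revealed as translation invariance after the logarithmic substitution, $\alpha+\beta\ge0$ as integrability (or weak-type membership) of the kernel at the diagonal $\xi=u$, and $\alpha+\delta>\frac1q$ as decay of the kernel in the regime $\xi\ll u$ --- each hypothesis of the lemma is matched to a concrete feature of $K$.
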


\begin{theorem}\label{thm:3.2}
For problem \eqref{equ:4.1}, if $F(t,x)\equiv0$ when
$|x|>\phi(t)-1$, then there exist some constants $\alpha$ and $\beta$ satisfying
\begin{equation}\label{Y-2}
\alpha+\frac{1}{6}+\beta=\f{5}{3q}, \quad \beta<\frac{1}{q},
\end{equation}
such that
\begin{equation}
\begin{split}
\big\|\big(\phi(t)^2-|x|^2\big)^{-\alpha}w\big\|_{L^q(\mathbb{R}^{1+1}_+)}\leq C
\big\|\big(\phi(t)^2-|x|^2\big)^{\beta}F\big\|_{L^{\frac{q}{q-1}}(\mathbb{R}^{1+1}_+)},
\end{split}
\label{equ:4.2}
\end{equation}
where $q=1+p$, \(p_{\crit}<p<p_0=9\) and $C>0$ is a constant depending on $m$, $q$, $\alpha$ and $\beta$.
\end{theorem}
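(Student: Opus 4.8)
The plan is to represent $w$ explicitly through the fundamental solution of the 1-D Tricomi operator and then to reduce the weighted estimate \eqref{equ:4.2} to the one-dimensional fractional-integration inequality of Lemma~\ref{lem:3.1}. First I would pass to the coordinate $\phi=\phi(t)=\f23 t^{\f32}$, under which a direct computation gives $\p_t^2-t\p_x^2=t\big(\p_\phi^2-\p_x^2+\f{1}{3\phi}\p_\phi\big)$, so that \eqref{equ:4.1} becomes the one-dimensional Euler--Poisson--Darboux equation $\p_\phi^2 w-\p_x^2 w+\f{1}{3\phi}\p_\phi w=\f1t F$ whose distinguished parameter is $\f16$ (the same $\f16$ that governs the decay in Theorem~\ref{thm:3.1}). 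Writing $w$ against the Riemann function $E$ of this operator yields $w(t,x)=\iint_{\{|x-y|\le\phi(t)-\phi(s)\}}E(t,x;s,y)F(s,y)\,\md y\,\md s$; the hypothesis that $F\equiv0$ for $|x|>\phi(t)-1$ keeps the source strictly inside the hyperbolic region, where this representation is valid and where the backward characteristic triangle from $(t,x)$ captures every contributing point.

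Next I would introduce the null coordinates $A_\pm=\phi(t)\pm x$ and $B_\pm=\phi(s)\pm y$. Three things happen at once: the weights factor as $\phi(t)^2-|x|^2=A_+A_-$ and $\phi(s)^2-|y|^2=B_+B_-$; the domain of integration collapses to the product region $0\le B_\pm\le A_\pm$; and the kernel $E$, read off from the hypergeometric asymptotics already recorded in \eqref{equ:3.5}--\eqref{equ:3.6}, displays its two characteristic singularities along $A_+=B_+$ and $A_-=B_-$ with integrable power-law exponents, multiplied only by benign powers of $A_++A_-=2\phi(t)$ and $B_++B_-=2\phi(s)$ coming from the Jacobian of $(t,x)\mapsto(\phi,x)$ and from the factor $t^{-1}$. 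This is the step that makes the problem effectively one-dimensional.

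With the integral in product form, the weighted estimate \eqref{equ:4.2} reduces, after integrating out one null direction against the integrable part of the kernel, to a one-dimensional fractional integral of exactly the shape $\int_0^{u}\f{g(\xi)\,\md\xi}{|u-\xi|^{\delta}|\xi|^{\beta}|u|^{\alpha}}$ treated in Lemma~\ref{lem:3.1}: the target weight $(A_+A_-)^{-\alpha}$ supplies the factor $|u|^{-\alpha}$, the leftover source weight $\big(\phi(s)^2-|y|^2\big)^{-\beta}$ (from writing $F(s,y)=\big(\phi(s)^2-|y|^2\big)^{-\beta}\cdot\big(\phi(s)^2-|y|^2\big)^{\beta}F(s,y)$) supplies $|\xi|^{-\beta}$, and the characteristic singularity supplies $|u-\xi|^{-\delta}$. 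Taking $r=\f{q}{q-1}$, the balance condition $\alpha+\beta+\delta=1-\big(\f1r-\f1q\big)=\f2q$ forces $\delta=\f16+\f1{3q}$, and one then checks directly that the conditions in \eqref{Y-2} encode precisely the remaining hypotheses of Lemma~\ref{lem:3.1}: the scaling identity $\alpha+\beta+\f16=\f{5}{3q}$ (which is exactly the scale-invariance of \eqref{equ:4.2}) is equivalent to this balance, while $\beta<\f1q$ is equivalent to $\alpha+\delta>\f1q$. Applying Lemma~\ref{lem:3.1} then yields \eqref{equ:4.2}.

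It remains to locate the exponent restrictions. The quantity $\alpha+\beta=\f{5}{3q}-\f16$ is positive exactly when $q<10$, i.e. $p<p_0=9$, which is the condition $\alpha+\beta\ge0$ in Lemma~\ref{lem:3.1} and explains the upper bound on $p$; the lower bound $p>p_{\crit}$ is not intrinsic to this estimate (any $1<p<9$ admits an admissible pair $(\alpha,\beta)$ with $\beta<\f1q$) but is carried along because the global-existence argument combines \eqref{equ:4.2} with Theorem~\ref{thm:3.1}, which requires $p>p_{\crit}$. The hard part will be the second step: obtaining pointwise bounds on the Riemann function $E$ that are sharp and uniform up to both characteristics and down to the degenerate line $t=0$, and confirming that the singularity exponents along $A_\pm=B_\pm$ are exactly those matching $\delta=\f16+\f1{3q}$. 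Because the Tricomi operator is not time-translation invariant, one cannot reduce the propagator from time $s$ to the $t=0$ estimates of Section~2 by Duhamel and scaling alone; the explicit hypergeometric kernel must be used, and controlling it uniformly through the confluence of the two characteristics at the vertex of the cone is the principal technical difficulty.
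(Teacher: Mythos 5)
Your overall strategy coincides with the paper's: represent $w$ by the explicit one\--dimensional Tricomi kernel (the paper cites Theorem 2.4 of \cite{Yag3} rather than re-deriving the Riemann function of the Euler--Poisson--Darboux form, but this is the same object), pass to null coordinates, and reduce by duality to two applications of Lemma~\ref{lem:3.1}, one in each null variable. Your exponent bookkeeping also matches the paper: $\alpha+\beta\geq0$ forces $q\leq10$, i.e.\ $p\leq9$; the condition $\beta<\tfrac1q$ is equivalent to $\alpha+\delta>\tfrac1q$; and your remark that the lower bound $p>p_{\crit}$ is not intrinsic to the linear estimate is correct (in the paper it enters only through the additional ansatz $\beta=-p\alpha$, imposed with a view toward the nonlinear iteration, which yields the constraint $p>\tfrac{3+\sqrt{33}}{2}$). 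The genuine gap is in the step you yourself single out as the crux: your description of the kernel is wrong, and with it the mechanism by which the factors $|u-\xi|^{-\delta}$, $|v-\eta|^{-\delta}$ required by Lemma~\ref{lem:3.1} are supposed to appear.

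The kernel is, up to a constant, $\big(\phi(t)+\phi(s)+x-y\big)^{-1/6}\big(\phi(t)+\phi(s)-(x-y)\big)^{-1/6}H\big(\tfrac16,\tfrac16,1;z\big)$, and since $1-\tfrac16-\tfrac16=\tfrac23>0$ the hypergeometric factor is \emph{bounded} up to $z=1$ (the paper verifies this at once with the Euler integral representation). Hence the kernel has no characteristic singularities along $A_\pm=B_\pm$ whatsoever: its only power factors are negative powers of the null \emph{sums} $A_++B_-=u+\eta$ and $A_-+B_+=v+\xi$, which encode decay, not singularity. Moreover, the exponent $\delta=\tfrac16+\tfrac1{3q}$ you hope to read off from the kernel depends on $q$, so it cannot be a kernel exponent at all; the $\tfrac1{3q}$ piece comes from the Jacobian of the substitution $T=\phi(t)$, $S=\phi(s)$ (namely $\md t=c\,T^{-1/3}\md T$), distributed between the two sides of the duality pairing. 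The real content of the paper's proof is the absorption step that you compress into the clause ``after integrating out one null direction against the integrable part of the kernel'': using $u+\eta=(u-\xi)+2S$, $v+\xi=(v-\eta)+2S$, $2T=u+v$, and the support hypothesis ($F\equiv0$ for $|x|>\phi(t)-1$, which forces $S\geq\phi(1)$), one trades the decay in the null sums and the Jacobian powers $(ST)^{-1/(3q)}$ for singular factors $|u-\xi|^{-\delta}|v-\eta|^{-\delta}$, thereby \emph{manufacturing} the kernel of Lemma~\ref{lem:3.1}; this is where all the delicacy resides and where the support hypothesis is actually used. So the part you flag as the principal technical difficulty (uniform control of the kernel through the confluence of the characteristics at the vertex) is in fact trivial, while the step you wave through is the whole proof; as written, your reduction to Lemma~\ref{lem:3.1} cannot get started, because the singular factors it needs are simply not present in the kernel.
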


\begin{remark}
  Recall that in \cite{HWYin1} we have defined the conformal exponent $p_{\conf}(n)$ for $n$-dimensional semilinear Tricomi equation
  $\p_t^2u-t\Delta u=|u|^p$
  \[p_{\conf}(n)=\f{N+2}{N-2}=\frac{3n+6}{3n-2}.\]
 Set $n=1$, we then have \(p_{\conf}(1)=p_0=9\).
\end{remark}

\begin{proof}
By the formula in Theorem 2.4 of \cite{Yag3}, the solution $w$ of \eqref{equ:4.1} satisfies
\begin{equation}\label{equ:4.3}
\begin{split}
w(t,x)=C\int_0^t\int_{x-\phi(t)+\phi(s)}^{x+\phi(t)-\phi(s)}&\big(\phi(t)+\phi(s)+x-y\big)^{-\gamma}
\big(\phi(t)+\phi(s)-(x-y)\big)^{-\gamma}\\
&\times H\big(\gamma,\gamma,1,z\big)F(s,y)\ \md y\md s,
\end{split}
\end{equation}
where $z=\frac{(-x+y+\phi(t)-\phi(s))(-x+y-\phi(t)+\phi(s))}
{(-x+y+\phi(t)+\phi(s))(-x+y-\phi(t)-\phi(s))}$, $H\big(\gamma,\gamma,1,z\big)$
is the hypergeometric function and \(\gamma=\f{1}{6}\).
By Page 59 of \cite{Erd1},
\begin{equation*}
\begin{split}
H(\gamma,\gamma,1,z)=&\frac{1}{\Gamma(\gamma)\Gamma(1-\gamma)}\int_0^1t^{\gamma-1}(1-t)^{-\gamma}
(1-zt)^{-\gamma}\md t \\
\leq &\frac{1}{\Gamma(\gamma)\Gamma(1-\gamma}\int_0^1t^{\gamma-1}(1-t)^{-\gamma}(1-t)^{-\gamma}\md t \\
= &\frac{1}{\Gamma(\gamma)\Gamma(1-\gamma)}B(\gamma, 1-2\gamma)\\
= &C.
\end{split}
\end{equation*}
Thus we have
\begin{equation}\label{equ:4.4}
  |w(t,x)|\leq C\int_0^t\int_{x-\phi(t)+\phi(s)}^{x+\phi(t)-\phi(s)}\big(\phi(t)+\phi(s)+x-y\big)^{-\gamma}
\big(\phi(t)+\phi(s)-(x-y)\big)^{-\gamma}|F(s,y)|\ \md y\md s.
\end{equation}
By \eqref{equ:4.2}, we need to estimate
\begin{equation}\label{equ:4.5}
  \begin{split}
\Big\|\big(\phi^2(t)-|x|^2\big)^{-\alpha}&w\Big\|_{L^q(\mathbb{R}^{1+1}_+)} \\
&=\sup_{K\in L^{\frac{q}{q-1}}(\mathbb{R}^{1+1}_+)}
\frac{\big|\int_0^{\infty}\int_{-\infty}^{\infty} K(t,x)
(\phi^2(t)-|x|^2)^{-\alpha}w\md x\md t\big|}{\parallel K\parallel_{L^{\frac{q}{q-1}}(\mathbb{R}^{1+1}_+)}}.
\end{split}
\end{equation}
Note that
\begin{equation}\label{equ:4.6}
\begin{split}
I& =:\int_0^{\infty}\int_{-\infty}^{\infty} K(t,x)
(\phi^2(t)-|x|^2)^{-\alpha}w(t,x)\ \md x\md t \\
& =\int_0^{\infty}\int_{-\infty}^{\infty}\int_0^t\int_{x-(\phi(t)-\phi(s))}^{x+\phi(t)-\phi(s)}
\frac{K(t,x)F(s,y)}{(\phi^2(t)-|x|^2)^\alpha} \\
&\quad \times\frac{1}{\big(\phi(t)+\phi(s)+x-y\big)^{\gamma}
\big(\phi(t)+\phi(s)-(x-y)\big)^{\gamma}}\ \md y \md s \md x \md t.
\end{split}
\end{equation}
Denote $\tilde{K}(T,x)=K(t,x)$ and $\tilde{F}(S,y)=F(s,y)$ with $T=\phi(t)$ and \(S=\phi(s)\). Then
\begin{equation}\label{equ:4.9}
\begin{split}
\parallel K\parallel_{L^{\frac{q}{q-1}}(\mathbb{R}^{1+1}_+)}
& =\bigg(\int_0^{\infty}\Big(\int_{-\infty}^{\infty}|K(t,x)|^{\frac{q}{q-1}}
\ \md x\Big)\ \md t\bigg)^{\frac{q-1}{q}} \\
& =\bigg(\int_0^{\infty}\Big(\int_{-\infty}^{\infty}|T^{-\frac{1}{6}\cdot\frac{q-1}{q}}\tilde{K}(T,x)|^{\frac{q}{q-1}}
\ \md x\Big)\ \md T\bigg)^{\frac{q-1}{q}}
\end{split}
\end{equation}
and
\begin{equation}\label{equ:4.10}
\begin{split}
&\Big\|\big(\phi^2(t)-|x|^2\big)^{\beta} F\Big\|_{L^{\frac{q}{q-1}}(\mathbb{R}^{1+1}_+)} \\
=&\bigg(\int_0^{\infty}\Big(\int_{-\infty}^{\infty}|T^{-\frac{1}{6}\cdot\frac{q-1}{q}}
(T^2-|x|^2)^{\beta}\tilde{F}(T,x)|^{\frac{q}{q-1}}\ \md x\Big)\ \md T\bigg)^{\frac{q-1}{q}}.
\end{split}
\end{equation}
With \eqref{equ:4.9} and \eqref{equ:4.10}, we can further write
\begin{equation}\label{equ:4.7}
\begin{split}
I& =\int_0^{\infty}\int_{-\infty}^{\infty}\int_0^T\int_{x-(T-S)}^{x+T-S}
\frac{\tilde{K}(T,x)\tilde{F}(S,y)}{(T^2-x^2)^{\alpha}} \\
&\qquad \times\frac{1}{\big(T+S+x-y\big)^{\gamma}
\big(T+S-(x-y)\big)^{\gamma}}S^{-\frac{1}{6}}T^{-\frac{1}{6}}\ \md y \md S \md x \md T \\
& =\int_0^{\infty}\int_{-\infty}^{\infty}\int_0^T\int_{x-(T-S)}^{x+T-S}
\frac{T^{-\frac{1}{6}\cdot\frac{1}{p}}|\tilde{K}(T,x)|S^{-\frac{1}{6}\cdot\frac{1}{p}}
(S^2-y^2)^{\beta}|\tilde{F}(S,y)|}
{(S^2-y^2)^{\beta}(T^2-x^2)^{\alpha}S^{\frac{1}{3q}}
T^{\frac{1}{3q}}} \\
&\qquad \times\frac{1}{\big(T+S+x-y\big)^{\gamma}
\big(T+S-(x-y)\big)^{\gamma}}\md y \md S \md x \md T.
\end{split}
\end{equation}

Let
\begin{equation*}
  \left\{ \enspace
\begin{aligned}
&u=T+x,\\
&v=T-x,
\end{aligned}
\right.
\qquad\qquad
  \left\{ \enspace
\begin{aligned}
&\xi=S+y,\\
&\eta=S-y.
\end{aligned}
\right.
\end{equation*}
By the assumption of $\text{supp$F(t,x)$}$, we know that
\[0\leq\xi\leq u, 0\leq\eta\leq v.\]

Set
\begin{equation*}\label{equ:4.11}
  G(\xi,\eta)=S^{-\frac{1}{6}\cdot\frac{1}{p}}(S^2-y^2)^{\beta}|\tilde{F}(S,y)|,
\end{equation*}
\begin{equation*}
  H(u,v)=T^{-\frac{1}{6}\cdot\frac{1}{p}}|\tilde{K}(T,x)|.\label{equ:4.12}
\end{equation*}
By \(T\geq S\geq \phi(1)\), we then have
\begin{equation}\label{equ:4.8}
  \begin{split}
     I & \leq\iint_{0\leq\xi\leq u}\iint_{0\leq\eta\leq v}\f{G(\xi,\eta)H(u,v)}
     {\xi^\beta\eta^\beta u^\alpha v^\alpha (u+\xi)^{\frac{1}{6}}(v+\eta)^{\frac{1}{6}}S^{\frac{1}{3q}}T^{\frac{1}{3q}}}\md y\md S\md x\md T \\
       & \leq\iint_{0\leq\xi\leq u}\iint_{0\leq\eta\leq v}\f{G(\xi,\eta)H(u,v)}
     {\xi^\beta u^\alpha|u-\xi|^{\frac{1}{3q}+\frac{1}{6}}\eta^\beta v^\alpha|v-\eta|^{\frac{1}{3q}+\frac{1}{6}}}\md \eta\md v\md \xi\md u.
  \end{split}
\end{equation}

By conditions
\eqref{Y-2} in Theorem~\ref{thm:3.2}, we require
\[1<p<2<q<\infty \]
and
\begin{equation}\label{equ:4.8.1}
\alpha+\beta=\f{2}{q}-\frac{1}{3q}-\frac{1}{6}\geq0, \quad \beta<\frac{1}{q}.
\end{equation}
Choosing \(\beta=-p\alpha\) in \eqref{equ:4.8.1}, then by
\(\beta<\frac{1}{q}\) and \(q=p+1\) one has that
\begin{equation}\label{equ:4.8.2}
  -p\alpha<\frac{1}{p+1}\Rightarrow\alpha>-\frac{1}{p(p+1)}.
\end{equation}
Substitute \eqref{equ:4.8.2} into \eqref{equ:4.8.1}, we get
\[\frac{p-1}{p(p+1)}+\frac{1}{6}\left(\frac{1}{2}+\frac{1}{q}\right)>\frac{2}{p+1}
\Leftrightarrow p^2-3p-6>0\Rightarrow p>p_1=\frac{3+\sqrt{33}}{2}.\]

On the other hand,
\[\frac{2}{q}-\frac{1}{3q}-\frac{1}{6}\geq0\Leftrightarrow q\leq10
\Leftrightarrow p\leq p_0=9.\]
Thus Lemma~\ref{lem:3.1} is applicable for $\dl=\frac{1}{3q}+\frac{1}{6}$ and \(p=\f{q}{q-1}\), we then estimate the
integral in \eqref{equ:4.8} as follows
\begin{equation*}
\begin{split}
\iint_{0\leq\xi\leq u}\iint_{0\leq\eta\leq v}&\f{G(\xi,\eta)H(u,v)}
     {|\xi|^\beta|u|^\alpha
|u-\xi|^{\frac{1}{3q}+\frac{1}{6}}|\eta|^\beta|v|^\alpha
|v-\eta|^{\frac{1}{3q}+\frac{1}{6}}}\md \eta\md v\md \xi\md u \\
&=\int_0^\infty\int_0^u\frac{1}{|\xi|^\beta|u|^\alpha
|u-\xi|^{\frac{1}{3q}+\frac{1}{6}}}\int_0^\xi H(u,v)\int_0^v
\frac{G(\xi,\eta)}{|\eta|^\beta|v|^\alpha
|v-\eta|^{\frac{1}{3q}+\frac{1}{6}}}
\md \eta\md v\md \xi\md u \\
&\leq\int_0^\infty\int_0^u\frac{1}{|\xi|^\beta|u|^\alpha
|u-\xi|^{\frac{1}{3q}+\frac{1}{6}}} \|H(u,\cdot)\|_{L^p}
\bigg\|\int_0^v
\frac{G(\xi,\eta)}{|\eta|^\beta|v|^\alpha
|v-\eta|^{\frac{1}{3q}+\frac{1}{6}}}
d\eta\bigg\|_{L_v^q}d\xi d\eta \\
& \leq\int_0^\infty\|H(u,\cdot)\|_{L^p}\int_0^u
\frac{\|G(\xi,\cdot)\|_{L^p}}{|\xi|^\beta|u|^\alpha
|u-\xi|^{\frac{1}{3q}+\frac{1}{6}}}
d\xi du \\
&\leq\|H\|_{L^p_{u,v}}\bigg\|\int_0^u
\frac{\|G(\xi,\cdot)\|_{L^p}}{|\xi|^\beta|u|^\alpha
|u-\xi|^{\frac{1}{3q}+\frac{1}{6}}}
d\xi\bigg\|_{L_u^q} \\
&\leq\|H\|_{L^p_{u,v}}\|G\|_{L^p_{\xi,\eta}}.
\end{split}
\end{equation*}
This, together with \eqref{equ:4.5} and \eqref{equ:4.8}, yields the proof of Theorem~\ref{thm:3.2}.

\end{proof}

Based on Theorem~\ref{thm:3.2}, we are able to prove such a crucial result:
\begin{theorem}\label{thm:3.3}
For problem \eqref{equ:4.1}, if $F(t,x)\equiv0$ when
$|x|>\phi(t)+M-1$ and $F\in C^{\infty}([0, T_0]\times\Bbb R)$ for some fixed
number $T_0>0$ $(T_0<1)$, then there exist some constants $\alpha$ and \(\beta\)satisfying $\alpha+\frac{1}{6}+\beta=\f{5}{3q}$, $\beta>\frac{1}{q}$, such that
\begin{equation}
\begin{split}
\Big\|\Big(\big(\phi(t)+M\big)^2-|x|^2\Big)^{-\alpha}w\Big\|_{L^q([\f{T_0}{2}, \infty)\times \mathbb{R})}\leq C
\Big\|\Big(\big(\phi(t)+M\big)^2-|x|^2\Big)^\beta F\Big\|_{L^{\frac{q}{q-1}}([\f{T_0}{2}, \infty)\times \mathbb{R})},
\end{split}
\label{equ:4.13}
\end{equation}
where $q=1+p$, \(p_{\crit}<p<9\), and $C>0$ is a constant depending on $q$, $\alpha$ and $\beta$.
\end{theorem}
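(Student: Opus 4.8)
The plan is to follow the scheme of the proof of Theorem~\ref{thm:3.2}, starting again from the representation formula \eqref{equ:4.3} and the kernel bound \eqref{equ:4.4}, and passing to the characteristic coordinates $u=\phi(t)+x$, $v=\phi(t)-x$, $\xi=\phi(s)+y$, $\eta=\phi(s)-y$. The one structural change is that the support hypothesis is now $|x|\le\phi(t)+M-1$ and the weight is re-centred at $\phi(t)+M$ rather than at $\phi(t)$; accordingly I would work with the shifted variables $\tilde u=u+M$, $\tilde v=v+M$, $\tilde\xi=\xi+M$, $\tilde\eta=\eta+M$, noting that $u-\xi=\tilde u-\tilde\xi$ and $v-\eta=\tilde v-\tilde\eta$. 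The key geometric observation is that, on the support of $F$ and on the domain of dependence, all four shifted variables are bounded below by a fixed positive constant. In other words, re-centring the weight at $\phi(t)+M$ precisely removes the degeneracy of $\phi^2(t)-|x|^2$ on the characteristic cone $|x|=\phi(t)$, which corresponds to $u=0$ or $v=0$.

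As in \eqref{equ:4.6}--\eqref{equ:4.8}, I would pair $w$ against a test function $K\in L^{q/(q-1)}$ and reduce the resulting quadruple integral to a product of two identical one-dimensional operators, each of the form $\int_c^{\tilde u}\tilde\xi^{-\beta}\tilde u^{-\alpha}|\tilde u-\tilde\xi|^{-\delta}g(\tilde\xi)\,d\tilde\xi$ with $\delta=\frac{1}{3q}+\frac16$ and both variables ranging over $[c,\infty)$. Because $\tilde u\ge c>0$ throughout, the singularity of the outer weight $\tilde u^{-\alpha}$ at the origin that constrained Theorem~\ref{thm:3.2} is now absent. Consequently the hypothesis $\alpha+\delta>\frac1q$ required by Lemma~\ref{lem:3.1} is no longer needed, and exactly the complementary range $\alpha+\delta<\frac1q$, i.e.\ $\beta>\frac1q$, becomes admissible; this is the mechanism by which the shift of the weight opens up the new parameter range.

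The substantive step, and the main obstacle, is therefore to supply the replacement for Lemma~\ref{lem:3.1}: a weighted fractional-integration (Stein--Weiss type) bound on the half-line $[c,\infty)$ valid for $\beta>\frac1q$. I would prove it by splitting the $\tilde\xi$-integral into $\{c\le\tilde\xi\le\tilde u/2\}$ and $\{\tilde u/2\le\tilde\xi\le\tilde u\}$: on the first piece $|\tilde u-\tilde\xi|\sim\tilde u$ and the operator is of Hardy type, while on the second piece $\tilde\xi\sim\tilde u$ and one is left with a genuine fractional integral controlled by the one-dimensional Hardy--Littlewood--Sobolev inequality. Applying this estimate successively in $(\tilde v,\tilde\eta)$ and then in $(\tilde u,\tilde\xi)$, interspersed with H\"older as in \eqref{equ:4.8}, yields the desired bilinear bound. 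The scaling identity $\alpha+\beta+\delta=\frac2q$ (equivalent to $\alpha+\frac16+\beta=\frac{5}{3q}$), together with $\alpha+\beta\ge0$ and $\delta<1$, all of which hold for $p_{\crit}<p<9$, guarantee convergence; crucially, no lower bound on $\alpha$ (equivalently, no upper bound on $\beta$) is imposed.

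Finally, I would dispose of the two remaining technical hypotheses. The restriction to $[\frac{T_0}{2},\infty)\times\R$ and the smoothness $F\in C^\infty([0,T_0]\times\R)$ enter only in controlling the contribution of small $s$ in \eqref{equ:4.3}, where $\phi(s)$ degenerates as $s\to0$: splitting $\int_0^t ds=\int_0^{T_0}+\int_{T_0}^t$, the first integral is estimated directly using the smoothness and compact support of $F$, so that it contributes a bounded, well-localized term, while the second is handled by the half-line estimate above. Combining the two pieces and undoing the change of variables then gives \eqref{equ:4.13}.
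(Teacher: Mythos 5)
Your geometric observation is correct --- on $\operatorname{supp}F$ and $\operatorname{supp}w$ the shifted characteristic variables are indeed bounded below by a positive constant --- but the conclusion you draw from it is wrong, and this is the fatal gap. The hypothesis $\alpha+\delta>\frac1q$ in Lemma~\ref{lem:3.1} is not caused by the singularity of the weights at the origin; it is forced by decay as the outer variable tends to infinity, and therefore it survives on any half-line $[c,\infty)$. Indeed, take $g$ a fixed nonnegative bump supported in $[c+1,c+2]$. Then for $\tilde u\ge 2(c+2)$,
\[
\int_c^{\tilde u}\frac{g(\tilde\xi)}{|\tilde u-\tilde\xi|^{\delta}\,\tilde\xi^{\beta}\,\tilde u^{\alpha}}\,d\tilde\xi\;\asymp\;\tilde u^{-(\alpha+\delta)},
\]
which lies in $L^q([c,\infty))$ only if $(\alpha+\delta)q>1$, i.e.\ only if $\beta<\frac1q$ (recall $\alpha+\beta+\delta=\frac2q$). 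The same test function shows that the Hardy-type half $\{c\le\tilde\xi\le\tilde u/2\}$ of your proposed replacement lemma is already unbounded from $L^{q/(q-1)}$ to $L^q$ when $\alpha+\delta\le\frac1q$, so the lemma you intend to prove is false and no combination of Hardy and Hardy--Littlewood--Sobolev can rescue it. Worse, the statement you are trying to prove is itself false for $\beta>\frac1q$: for a nonnegative bump $F$ supported in $\{1<s<2,\ |y|<\frac12\}$, formula \eqref{equ:4.3} (whose kernel is positive, since $H(\frac16,\frac16,1,z)\ge1$ for $0\le z<1$) gives $w(t,x)\ge c\,\big[\phi(t)\big(1+\phi(t)-|x|\big)\big]^{-1/6}$ on $\{|x|\le\phi(t)-\phi(2)-1\}$, and since $\beta>\frac1q$ forces $(\alpha+\frac16)q=\frac53-\beta q<\frac23$, integration over this cone yields
\[
\Big\|\big((\phi(t)+M)^2-|x|^2\big)^{-\alpha}w\Big\|_{L^q}^q\;\gtrsim\;\int^{\infty}\phi(t)^{\,1-2(\alpha+\frac16)q}\,dt\;\gtrsim\;\int^{\infty}t^{-\frac12}\,dt\;=\;\infty,
\]
while the right side of \eqref{equ:4.13} is finite. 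So no $\beta>\frac1q$ is admissible (divergence in fact occurs for all $\beta\ge\frac{5}{6q}$). The condition ``$\beta>\frac1q$'' in the statement is a typo for ``$\beta<\frac1q$'': that is what the paper's own proof delivers, and what the application in Section 4 uses (there $\beta=p\gamma$ with $\gamma<\frac1{p(p+1)}$, i.e.\ $\beta<\frac1q$).

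You should also know that the paper's proof is structurally different from your plan and never returns to the kernel or to Lemma~\ref{lem:3.1}. It covers $Q=\{t\ge\frac{T_0}2,\ |x|\le\phi(t)+M-1\}$ by finitely many $x$-translates $Q_j$ (shifts $\nu_j$) of the cone $Q_1=\{t\ge\frac{T_0}2,\ |x|\le\phi(t)-\phi(\frac{T_0}4)\}$, splits $F=\sum_jF_j$ with $\operatorname{supp}F_j\subseteq Q_j$, applies Theorem~\ref{thm:3.2} to each $w_j$ after translating $x\mapsto x-\nu_j$ (the Tricomi operator is translation invariant in $x$, and $\operatorname{supp}w_j\subseteq Q_j$, see \eqref{equ:4.14}), and then proves the two-sided pointwise equivalence $\phi(t)^2-|x-\nu_j|^2\asymp(\phi(t)+M)^2-|x|^2$ on $Q_j$; this comparison is where $t\ge\frac{T_0}2$ and the fixed size of $M$ enter, see \eqref{equ:4.15}--\eqref{equ:4.28}. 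Summing over $j$ gives \eqref{equ:4.13} with exactly the exponents of Theorem~\ref{thm:3.2}, hence with $\beta<\frac1q$. The correct repair of your write-up is therefore to aim at $\beta<\frac1q$ and to replace the re-derivation of kernel estimates by this covering/translation/weight-comparison reduction: re-centring variables can remove the degeneracy of the weight on the light cone, but it cannot enlarge the range of $\beta$, which is dictated by the large-time decay of the Tricomi kernel and is unaffected by any shift.
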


\begin{proof}
To prove \eqref{equ:4.13}, at first we focus on a special case of $F(t,x)\equiv0$ when $|x|>\phi(t)-\phi\big(\frac{T_0}{4}\big)$.
By the finite propagation speed property for the hyperbolic equation \eqref{equ:4.1}, we know that
the integral domain in  \eqref{equ:4.13} is just only  $Q=:\{(t,x): t\geq\frac{T_0}{2}, |x|\leq\phi(t)+M-1\}$.
Note that $Q$ can be covered by a finite number of angular domains $\{Q_j\}_{j=1}^{N_0}$, where the curved cone $Q_j$
$(j\ge 2)$
is a shift in the $x$ variable with respect to the angular domain
\[Q_1=\Big\{(t,x): t\ge\frac{T_0}{2}, |x|\leq\phi(t)-\phi\Big(\frac{T_0}{4}\Big)\Big\}.\]
Set
\begin{equation*}
\begin{split}
&F_1=\chi_{Q_1}F, \\
&F_2=\chi_{Q_2}(1-\chi_{Q_1})F,\\
&\qquad \qquad \ldots\\
&F_{N_0}=\chi_{Q_{N_0}}\big(1-\chi_{Q_1}-\chi_{Q_2}(1-\chi_{Q_1})-\cdot\cdot\cdot
-\chi_{Q_{N_0-1}}(1-\chi_{Q_1})\cdot\cdot\cdot(1-\chi_{Q_{N_0-2}})\big)F, \\
\end{split}
\end{equation*}
where $\chi_{Q_j}$ stands for the characteristic function of $Q_j$, and $\ds\sum_{j=1}^{N_0}F_j=F$. Let $w_j$ solve
\begin{equation*}
\begin{cases}
&\partial_t^2 w_j-t\triangle w_j=F_j(t,x), \\
&w_j(0,x)=0,\quad \partial_tw_j(0,x)=0.
\end{cases}
\end{equation*}
Then $\operatorname{supp}w_j\subseteq Q_j$. Since the Tricomi equation is invariant under the translation
with respect to the variable $x$, it follows from Theorem 3.2 that
\begin{equation}\label{equ:4.14}
\big\|\big(\phi(t)^2-|x-\nu_j|^2\big)^{\gamma_1}w_j\big\|_{L^q(Q_j)}\leq C
\big\|\big(\phi(t)^2-|x-\nu_j|^2\big)^{\gamma_2}F_j\big\|_{L^{\frac{q}{q-1}}(Q_j)},
\end{equation}
where $\nu_j\in\mathbb{R}^n$ corresponds to the coordinate shift of the space variable $x$ from $Q_1$ to $Q_j$,
and $Q_j=\{(t,x): t\ge\frac{T_0}{2},
|x-\nu_j|\leq\phi(t)-\phi(\frac{T_0}{4})\}$.

Next we derive \eqref{equ:4.13} by utilizing \eqref{equ:4.14} and the condition of $t\geq\frac{T_0}{4}$.
At first, we illustrate that there exists a constant $\delta>0$ such that for $(t,x)\in Q_j$,
\begin{equation}\label{equ:4.15}
\phi(t)^2-|x-\nu_j|^2\geq\delta\Big(\big(\phi(t)+M\big)^2-|x|^2\Big).
\end{equation}

To prove \eqref{equ:4.15} for $1\le j\le N_0$, it only suffices to consider the two extreme cases: $\nu_j=0$ (corresponding to
$j=1$) and $|\nu_{j_0}|=M-1+\phi(\frac{3T_0}{8})$ (choosing  $j_0$
such that $|\nu_{j_0}|=\max_{1\le j\le N_0}|\nu_j|=M-1+\phi(\frac{3T_0}{8})$. Note that
$|\nu_{j_0}|>M-1$ holds so that the domain $Q$ can be covered by $\ds{\cup_{j=1}^{N_0}}Q_j$).

For $\nu_j=0$, \eqref{equ:4.15} is equivalent to
\begin{equation}\label{equ:4.16}
\phi(t)^2\geq(1-\delta)|x|^2+\delta\big(\phi(t)+M\big)^2.
\end{equation}
We now illustrate that \eqref{equ:4.16} is correct.
By $|x|\leq\phi(t)-\phi(\frac{T_0}{4})$ for $(t,x)\in Q_1$,
then in order to show \eqref{equ:4.16} it suffices to prove
\begin{equation*}
\begin{split}
\phi(t)^2\geq(1-\delta)\bigg(\phi(t)-\phi\Big(\frac{T_0}{4}\Big)\bigg)^2+\delta\big(\phi(t)+M\big)^2.
\end{split}
\end{equation*}
This is equivalent to
$$\Big\{2(1-\delta)\phi\Big(\frac{T_0}{4}\Big)-2\delta M\Big\}\phi(t)\geq(1-\delta)\phi^2\Big(\frac{T_0}{4}\Big)+\delta M^2.
$$
Obviously, this is easily achieved by $t\geq \frac{T_0}{4}$ and the smallness of $\delta$.

For $\nu_{j_0}=M-1+\phi\big(\frac{3T_0}{8}\big)$, the argument on \eqref{equ:4.15} is a little involved. First, note that
for fixed $t>0$, the domain
$Q$ is symmetric with respect to the variable $x$,
thus we can assume $\nu_{j_0}=\nu=M-1+\phi(\frac{3T_0}{8})$.
In this case, \eqref{equ:4.15} is equivalent to
\begin{equation}\label{equ:4.17}
\begin{split}
\phi(t)^2&\geq|x-\nu|^2+\delta\big((\phi(t)+M)^2-|x|^2\big) \\
&=(1-\delta)x^2-2\nu x+\nu^2+\delta\big(\phi(t)+M\big)^2 \\
&=: G(t,x).
\end{split}
\end{equation}
For fixed $t>0$, $G(t,x)$ is a quadratic function of the variable $x$, and
takes minimum at the point $x=\f{\nu}{1-\delta}$. Thus for the same fixed $t>0$,
the maximum of $G(t,x)$ in the domain $Q^t=:\{x: |x-\nu|\leq\phi(t)-\phi(\frac{T_0}{4})\}$
must be achieved on the boundary $\p Q^t=:\{x: |x-\nu|=\phi(t)-\phi\big(\frac{T_0}{4}\big)\}$.
Then in order to show
\eqref{equ:4.17}, our task is to prove
\begin{equation}\label{equ:4.18}
\phi(t)^2\geq\bigg(\phi(t)-\phi\Big(\frac{T_0}{4}\Big)\bigg)^2+\delta\Big(\big(\phi(t)+M\big)^2-|x|^2\Big).
\end{equation}
For this end, it is only enough to consider the case that $|x|^2$ takes its minimum on $\p Q^t$.
Note that on $\p Q^t$, we have
\begin{equation}\label{equ:4.19}
|x|^2=\bigg(\phi(t)-\phi\Big(\frac{T_0}{4}\Big)\bigg)^2+2\nu x-\nu^2.
\end{equation}
Therefore, without loss of generality, we can take
\begin{equation}\label{equ:4.20}
x=\nu-\phi(t)+\phi\Big(\frac{T_0}{4}\Big).
\end{equation}
Substituting \eqref{equ:4.20} and \eqref{equ:4.19} into \eqref{equ:4.18}, we are left to prove
\begin{equation}\label{equ:4.21}
\begin{split}
\phi(t)^2&\geq\bigg(\phi(t)-\phi\Big(\frac{T_0}{4}\Big)\bigg)^2+\delta\bigg\{\big(\phi(t)+M\big)^2 \\
&\quad -\bigg(\phi(t)-\phi\Big(\frac{T_0}{4}\Big)\bigg)^2+2\nu\bigg(\phi(t)-\phi\Big(\frac{T_0}{4}\Big)\bigg)-\nu^2\bigg\} \\
&=\phi(t)^2+\bigg\{2\delta\bigg(\phi\Big(\frac{T_0}{4}\Big)+M+\nu\bigg)-2\phi\Big(\frac{T_0}{4}\Big)\bigg\}\phi(t) \\
&\quad +(1-\delta)\phi\Big(\frac{T_0}{4}\Big)^2+\delta M^2-\delta\nu\bigg(\nu+2\phi\Big(\frac{T_0}{4}\Big)\bigg).
\end{split}
\end{equation}
For fixed $T_0>0$ and $M>1$, if $\delta>0$ is small enough, one then has
\begin{equation}\label{equ:4.22}
\begin{split}
&2\delta\bigg(\phi\Big(\frac{T_0}{4}\Big)+M+\nu\bigg)\leq\f{1}{2}\phi\Big(\frac{T_0}{4}\Big), \\
&(1-\delta)\phi\Big(\frac{T_0}{4}\Big)^2+\delta M^2\leq\f{3}{2}\phi\Big(\frac{T_0}{4}\Big)^2.
\end{split}
\end{equation}
By \eqref{equ:4.22} and \eqref{equ:4.21}, in order to derive \eqref{equ:4.18}, one should
derive
\[-\f{3}{2}\phi\Big(\frac{T_0}{4}\Big)\phi(t)+\f{3}{2}\phi^2\Big(\frac{T_0}{4}\Big)\leq0.\]
Obviously, this holds true by $t\geq\frac{T_0}{4}$. Then \eqref{equ:4.18} is proved.

Consequently, for $(t,x)\in\bigcup_{j=1}^{N_0}Q_j$,
there exists a fixed positive constant $c>0$ such that for $1\le j\le N_0$,
\begin{equation}\label{equ:4.23}
\begin{split}
c\Big(\big(\phi(t)+M\big)^2-|x|^2\Big)\leq\phi(t)^2-|x-\nu_j|^2.
\end{split}
\end{equation}

On the other hand, note that by $|x|\leq\phi(t)+M-1$ for $(t,x)\in Q$, one has
\begin{equation}\label{equ:4.24}
\begin{split}
&2\big\{\big(\phi(t)+M\big)^2-|x|^2\big\}-\{\phi^2(t)-|x-\nu_j|^2\} \\
&\geq (|x|+1)^2-|x|^2+\big(\phi(t)+M\big)^2-|x|^2-\phi(t)^2+|x-\nu_j|^2 \\
&=2M\phi(t)+M^2+|\nu_j|^2+1+2(1-|\nu_j|)|x|.
\end{split}
\end{equation}
If $1-|\nu_j|<0$, then by $|\nu_j|\leq M-1+\phi\big(\frac{3T_0}{8}\big)$ and the smallness of
$T_0$, the last line in \eqref{equ:4.24} is bounded from below by
\begin{equation}\label{equ:4.25}
\begin{split}
&2M\phi(t)+M^2+|\nu_j|^2+1+2\Big\{2-M-\phi\Big(\frac{3T_0}{8}\Big)\Big\}\{\phi(t)+M-1\} \\
&=4\phi(t)-M^2+6M-3+|\nu_j|^2-2\phi\Big(\frac{3T_0}{8}\Big)\phi(t)-2(M-1)\phi\Big(\frac{3T_0}{8}\Big) \\
&\geq 2\phi(t)-M^2+1;
\end{split}
\end{equation}
while in the case of $1-|\nu_j|\geq0$, it follows from \eqref{equ:4.24}  that
\begin{equation}\label{equ:4.26}
2\big\{\big(\phi(t)+M\big)^2-|x|^2\big\}-\{\phi(t)^2-|x-\nu_j|^2\}\geq M^2+1>0.
\end{equation}
Substituting \eqref{equ:4.25}-\eqref{equ:4.26} into \eqref{equ:4.24} yields that for $2\phi(t)\geq M^2-1$,
\begin{equation}\label{equ:4.27}
\begin{split}
\phi(t)^2-|x-\nu_j|^2\leq C \Big(\big(\phi(t)+M\big)^2-|x|^2\Big).
\end{split}
\end{equation}
On the other hand, if $2\phi(t)< M^2-1$, then
\begin{equation}\label{equ:4.28}
\begin{split}
\phi(t)^2-|x-\nu_j|^2\leq\phi(t)^2\leq C_M\leq C_M\Big(\big(\phi(t)+M\big)^2-|x|^2\Big).
\end{split}
\end{equation}
Therefore,
\begin{equation*}
\begin{split}
&\Big\|\Big(\big(\phi(t)+M\big)^2-|x|^2\Big)^{\gamma_1}w\Big\|_{L^q([\frac{T_0}{2},\infty)\times\mathbb{R})}\\
&\leq C\sum_{j=1}^{N_0}\Big\|\Big(\big(\phi(t)+M\big)^2-|x|^2\Big)^{\gamma_1}w_j\Big\|_{L^q(Q_j)} \\
&\leq C\sum_{j=1}^{N_0}\big\|\big(\phi(t)^2-|x-\nu_j|^2\big)^{\gamma_1}w_j\big\|_{L^q(Q_j)}\qquad\qquad \text{\big(by \eqref{equ:4.23}\big)} \\
&\leq C\sum_{j=1}^{N_0}\big\|\big(\phi(t)^2-|x-\nu_j|^2\big)^{\gamma_2}F_j\big\|_{L^{\frac{q}{q-1}}(Q_j)}\qquad\quad \text{\big(by \eqref{equ:4.14}\big)}  \\
&\leq C\sum_{j=1}^{N_0}\Big\|\Big(\big(\phi(t)+M\big)^2-|x|^2\Big)^{\gamma_2}F_j\Big\|_{L^{\frac{q}{q-1}}(Q_j)} \qquad \text{\big(by \eqref{equ:4.28}\big)} \\
&\leq C_{N_0}\Big\|\Big(\big(\phi(t)+M)^2-|x|^2\big)^{\gamma_2}F\Big\|_{L^{\frac{q}{q-1}}([\frac{T_0}{2},\infty)\times\mathbb{R})},
\end{split}
\end{equation*}
which derives \eqref{equ:4.13}.
\end{proof}

\section{Proof of Theorem~\ref{thm:1.2}}
To establish the global existence, we shall choose \(q=p+1\) in Theorem 2.1 and Theorem 3.3, and
subsequently consider the following two cases:

\vskip 0.2 true cm

\textbf{Case I.  \(p_{\crit}<p<p_0=9\)}

\vskip 0.2 true cm

By the local existence and regularity of weak solution $u$ to (1.2)
(for examples, one can see \cite{Rua1} or references therein), one knows that $u\in C^{\infty}([0, T_0]\times\Bbb R)$
exists for any fixed constant $T_0<1$ and $u$ has a compact support on the variable $x$. Moreover, for any $N\in\Bbb N$,
\begin{equation}
\begin{split}
&\Big\|{u}\Big(\f{T_0}{2}, \cdot\Big)\Big\|_{C^N}+\Big\| \partial_t{u}\Big(\f{T_0}{2}, \cdot\Big)\Big\|_{C^N}\le C_{N}\ve.\\
\end{split}
\label{equ:6.1}
\end{equation}
Then we can take $\Big({u}\big(\f{T_0}{2}, x\big), \partial_t{u}\big(\f{T_0}{2}, x\big)\Big)$ as the new initial data to
solve (1.2) from $t=\f{T_0}{2}$.

Now we use the standard Picard iteration to prove Theorem 1.1. Let $u_{-1}\equiv0$, and for $k=0,1,2,3,\ldots$, let $u_k$ be the solution of
the following equation
\begin{equation*}
\begin{cases}
&\partial_t^2 u_k-t\partial_x^2 u_k=F_p(t,u_{k-1}), \quad (t,x)\in\big(\f{T_0}{2}, \infty\big)\times\mathbb{R},\\
&u_k\big(\f{T_0}{2},x\big)={u}\big(\f{T_0}{2},x\big)\quad \partial_tu_k\big(\f{T_0}{2},x\big)=\partial_t{u}\big(\f{T_0}{2}, x\big).
\end{cases}
\end{equation*}
For  $p>p_{crit}$, we can fix a number $\gamma$ satisfying
\begin{equation*}\label{equ:6.1.1}
  \gamma<\frac{1}{p(p+1)},
\end{equation*}
\begin{equation*}\label{equ:6.1.2}
  \gamma<\f{1}{6}-\f{5}{6(p+1)},
\end{equation*}
and
\begin{equation*}\label{equ:6.1.3}
  \left(p-1\right)\gamma+\f{1}{6}>\f{5}{3(p+1)}.
\end{equation*}
Set
\begin{align*}
M_k=&\Big\|\Big(\big(\phi(t)+M\big)^2-|x|^2\Big)^\gamma u_k\Big\|_{L^q([\f{T_0}{2},\infty)\times\mathbb{R})}, \\
N_k=&\Big\|\Big(\big(\phi(t)+M\big)^2-|x|^2\Big)^\gamma (u_k-u_{k-1})\Big\|_{L^q([\f{T_0}{2},\infty)\times\mathbb{R})},
\end{align*}
where $q=p+1$. By \eqref{equ:6.1} and Theorem 2.1, we know that there exists a constant $C_0>0$ such that
\[M_0\leq C_0\ve.\]
Notice that for $j$, $k\geq0$,
\begin{equation*}
\begin{cases}
&\partial_t^2 (u_{k+1}-u_{j+1})-t \partial_x^2(u_{k+1}-u_{j+1}) =V(u_k,u_j)(u_k-u_j),   \\
&(u_{k+1}-u_{j+1})\big(\f{T_0}{2}, x\big)=0, \quad \partial_t(u_{k+1}-u_{j+1})\big(\f{T_0}{2}, x\big)=0,
\end{cases}
\end{equation*}
where
\begin{equation*}
\big|V(u_k,u_j)\big|\le
\begin{cases}
&C(|u_k|+|u_j|)^{p-1} \quad \text{if} \quad t\geq T_0, \\
&C(1+|u_k|+|u_j|)^{p-1} \quad \text{if} \quad \f{T_0}{2}\leq t\leq T_0.
\end{cases}
\end{equation*}
Then applying Theorem 3.2 and H\"{o}lder's inequality yields that for \(q=p+1\),
\begin{equation}\label{equ:6.4}
\begin{split}
&\Big\|\Big(\big(\phi(t)+M\big)^2-|x|^2\Big)^\gamma (u_{k+1}-u_{j+1})\Big\|_{L^q([\f{T_0}{2},\infty)\times\mathbb{R})} \\
&\leq C\Big\|\Big(\big(\phi(t)+M\big)^2-|x|^2\Big)^{p\gamma}V(u_k,u_j)(u_k-u_j)\Big\|_{L^{\frac{q}{q-1}}([\f{T_0}{2},\infty)\times\mathbb{R})} \\
&\leq C\Big\{\Big\|\Big(\big(\phi(t)+M\big)^2-|x|^2\Big)^\gamma (1+|u_k|+|u_j|)\Big\|_{L^q([\f{T_0}{2}, T_0]\times\mathbb{R})} \\
&\qquad +\Big\|\Big(\big(\phi(t)+M\big)^2-|x|^2\Big)^\gamma (|u_k|+|u_j|)\Big\|_{L^q([T_0,\infty]\times\mathbb{R})}\Big\}^{p-1} \\
&\quad \times \Big\|\Big(\big(\phi(t)+M\big)^2-|x|^2\Big)^\gamma (u_k-u_j)\Big\|_{L^q([\f{T_0}{2},\infty)\times\mathbb{R})} \\
&\leq C\big(C_1T_0^{\frac{1}{q}}+M_k
+M_j\big)^{p-1}\Big\|\Big(\big(\phi(t)+M\big)^2-|x|^2\Big)^\gamma (u_k-u_j)\Big\|_{L^q([\f{T_0}{2},\infty)\times\mathbb{R})}.
\end{split}
\end{equation}
If $j=-1$, then $M_j=0$, and we conclude that from \eqref{equ:6.4}
\[M_{k+1}\leq M_0+\frac{M_k}{2}\quad \text{for} \quad C\big(C_1T_0^{\frac{1}{q}}+M_k\big)^{p-1}\leq\frac{1}{2}.\]
This yields that
\[M_k\leq 2M_0 \quad \text{if} \quad C\big(C_1T_0^{\frac{1}{q}}+C_0\ve\big)^{p-1}\leq\frac{1}{2}.\]
Thus we get the boundedness of $\{u_k\}$ in the space $L^q(\Bbb R_+^{1+1})$ when the fixed constant $T_0$
and $\ve>0$ are sufficiently small. Similarly, we have
\[N_{k+1}\leq\frac{1}{2}N_k,\]
which derives that there exists a function $u\in L^q\big(\big[\f{T_0}{2}, \infty\big)\times\mathbb{R}\big)$ such that  $u_k\rightarrow u\in L^q\big(\big[\f{T_0}{2}, \infty\big)\times\mathbb{R}\big)$.
In addition, by the uniform boundedness of $M_k$ and the computations above, one easily obtains
\begin{align*}
&\parallel F_p(t,u_{k+1})-F_p(t,u_k)\parallel_{L^{\frac{q}{q-1}}([\f{T_0}{2}, \infty)\times\mathbb{R})} \\
&\leq C\parallel u_{k+1}-u_k\parallel_{L^q([\f{T_0}{2}, \infty)\times\mathbb{R})} \\
&\leq C\phi\Big(\frac{T_0}{4}\Big)^{-\gamma}N_k \\
&\le C2^{-k}.
\end{align*}
Therefore $F_p(t,u_k)\rightarrow F_p(t,u)$ in $L^{\frac{q}{q-1}}\big(\big[\f{T_0}{2}, \infty\big)\times\mathbb{R}\big)$ and hence $u$ is a weak solution of (1.2)
in the sense of distributions.

\vskip 0.2 true cm

\textbf{Case II. \(p\geq p_0=9\)}

\vskip 0.2 true cm

In this case, Theorem 1.1 can be achieved by completely analogous method in Theorem 1.2 of \cite{HWYin1}, we omit it here.

Combining {\bf Case I} and {\bf Case II}, we complete the proof of Theorem 1.1.

\section{Proof of Theorem 1.2.}
In this section, we shall show Theorem~\ref{thm1.1}.
 Motivated by \cite{Yor}, we introduce
the function $G(t)=\int_{\R}u(t,x)\,\md x$. By some estimates from \cite{HWYin1},
we can obtain a Riccati-type differential inequality for $G(t)$ so
that blowup of $G(t)$ is deduced from the following result (see Lemma 4 of
\cite{Sid}):
\begin{lemma}\label{lem2.1}
Suppose that $G\in C^2([a,b);\R)$ and, for $a\leq t<b$,
\begin{align}
G(t)&\geq C_0(R+t)^\alpha, \label{equ:2.1}\\
G''(t)&\geq C_1(R+t)^{-q}G(t)^p, \label{equ:2.2}
\end{align}
where $C_0$, $C_1$, and $R$ are some positive constants. Suppose further
that $p>1$, $\alpha\geq 1$, and $\left(p-1\right)\alpha\geq q-2$. Then
$b$ is finite.
\end{lemma}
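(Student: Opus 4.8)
The plan is to show that the two hypotheses \eqref{equ:2.1} and \eqref{equ:2.2} cannot coexist on an infinite interval, by running a Kato-type iteration in which the differential inequality \eqref{equ:2.2} is integrated twice at each stage to feed back an improved polynomial lower bound. The exponent arithmetic in such a scheme produces the characteristic shift $+2$, which is precisely what the hypothesis $(p-1)\alpha\ge q-2$ is designed to control. First I would record the structural facts: since $G(t)\ge C_0(R+t)^\alpha>0$ and the right-hand side of \eqref{equ:2.2} is nonnegative, we have $G''\ge 0$, so $G$ is convex and $G'$ is nondecreasing; because $\alpha\ge 1$ forces $G(t)\to\infty$, $G'$ cannot remain nonpositive, so there is a first $t_1\in[a,b)$ with $G'(t_1)>0$, and then $G'(t)\ge G'(t_1)>0$, $G(t)>0$ on $[t_1,b)$. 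All integrations will start at $t_1$, where $G(t_1)$ and $G'(t_1)$ are nonnegative and can simply be discarded. I would argue by contradiction, assuming $b=\infty$.

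The iteration proceeds as follows. Suppose inductively $G(t)\ge c_k(R+t)^{\alpha_k}$ on $[t_1,\infty)$, with $\alpha_0=\alpha$, $c_0=C_0$. Substituting into \eqref{equ:2.2} gives
\[
G''(t)\ge C_1(R+t)^{-q}\big(c_k(R+t)^{\alpha_k}\big)^p=C_1c_k^p(R+t)^{p\alpha_k-q}.
\]
Integrating twice from $t_1$ and dropping the nonnegative boundary terms yields $G(t)\ge c_{k+1}(R+t)^{\alpha_{k+1}}$ for $t$ beyond a threshold, with
\[
\alpha_{k+1}=p\alpha_k-q+2,\qquad c_{k+1}=\frac{C_1c_k^p}{(\alpha_{k+1}-1)\alpha_{k+1}}.
\]
The affine map $s\mapsto ps-q+2$ has slope $p>1$ and fixed point $\tfrac{q-2}{p-1}$, and the hypothesis $(p-1)\alpha\ge q-2$ says exactly that $\alpha_0$ sits at or above this fixed point. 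Hence $\{\alpha_k\}$ is nondecreasing, and when the inequality is strict it grows geometrically, $\alpha_k\sim\big(\alpha-\tfrac{q-2}{p-1}\big)p^k\to\infty$. A point I must check here is that, using $\alpha\ge 1$, the $(R+t)^{\alpha_k}$ term dominates the linear boundary error in the double integration, so the threshold past which the clean bound holds is uniform in $k$.

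In the strict case the contradiction is then harvested by tracking the constants. Taking logarithms in the recursion, with $\alpha_{k+1}\le Cp^k$, gives $\log c_{k+1}\ge p\log c_k-2k\log p-C'$, from which one sees $\log c_k\ge -Cp^k$. Evaluating the accumulated bound at a fixed $t_*<b=\infty$,
\[
\log G(t_*)\ge \log c_k+\alpha_k\log(R+t_*)\ge p^k\big(c_*\log(R+t_*)-C\big),
\]
with $c_*>0$; choosing $t_*$ so large that $\log(R+t_*)>C/c_*$ forces $G(t_*)=\infty$, contradicting $G\in C^2$. This is where the assumption $b=\infty$ is essential, since it supplies arbitrarily distant base points $t_*$.

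The main obstacle is the critical equality case $(p-1)\alpha=q-2$, in which the exponent is pinned at the fixed point and the power of $(R+t)$ never improves. Then everything reduces to the scalar recursion $c_{k+1}=Ac_k^p$ with $A=\tfrac{C_1}{\alpha(\alpha-1)}$, which blows up only if the seed $C_0$ exceeds $A^{-1/(p-1)}$ and otherwise merely converges — so a pure power ansatz is insufficient for general constants. To handle this I would refine the iterated ansatz by a slowly varying factor, $G\ge c_k(R+t)^\alpha\big(\log(R+t)\big)^{\beta_k}$: double integration sends $\beta_k\mapsto p\beta_k$, so $\beta_k\to\infty$ geometrically, and at a sufficiently distant $t_*$ the term $\beta_k\log\log(R+t_*)$ beats the geometric decay of $\log c_k$, again giving $G(t_*)=\infty$. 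The delicate parts are seeding $\beta_0>0$ and controlling the error produced by the logarithmic factor under the two integrations; the standing hypothesis $\alpha\ge 1$ is again what makes the leading term dominate and keeps the construction uniform in $k$.
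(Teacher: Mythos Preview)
The paper does not prove this lemma at all; it merely quotes it from Sideris~\cite{Sid}. Your Kato-type iteration is the standard mechanism behind such results, and for the strict inequality $(p-1)\alpha>q-2$ your outline is sound: the exponents $\alpha_k$ run off to infinity like $p^k$, $\log c_k\ge -Cp^k$, and evaluation at a far enough $t_*$ yields the contradiction. The threshold issue you flag is genuine and your one-line justification does not quite dispose of it (integrating from the previous threshold $T_k$ forces the clean bound only from some $T_{k+1}>T_k$); the standard repair is to let $T_{k+1}-T_k\sim (R+T_k)/\alpha_{k+1}$, which keeps the correction factors bounded below while $\sum(T_{k+1}-T_k)<\infty$ thanks to the geometric growth of $\alpha_k$, so the thresholds stay bounded.

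The real gap is in the equality case $(p-1)\alpha=q-2$ with $\alpha>1$: here the lemma, as transcribed with a non-strict inequality, is simply \emph{false}. Take $G(t)=C_0(R+t)^\alpha$ with $C_0^{\,p-1}=\alpha(\alpha-1)/C_1$; then $G''=C_1(R+t)^{-q}G^p$ exactly, both \eqref{equ:2.1} and \eqref{equ:2.2} hold with equality, and $G$ is global. This is precisely the fixed point $A^{-1/(p-1)}$ of your scalar recursion $c_{k+1}=Ac_k^p$, so the stalling you observed is not an artifact of the method but the actual behavior. Your proposed logarithmic refinement cannot rescue this sub-case: double integration of $(R+t)^{\alpha-2}$ with $\alpha>1$ returns $(R+t)^\alpha$ with no logarithm, so the seeding $\beta_0>0$ you correctly isolate as the crux is impossible. (When $\alpha=1$ the integrand is $(R+t)^{-1}$, a genuine $\log$ appears, and your scheme does go through.) In the paper's application in Section~5 only the strict inequality is ever invoked, so this defect in the statement of the lemma does not affect Theorem~\ref{thm1.1}.
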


In view of $\operatorname{supp} u_i\subseteq (-M, M)$ ($i=0,1$) and
the finite propagation speed of solutions to hyperbolic equations,
one has that, for any fixed $t>0$, the support of $u(t,\cdot)$ with
respect to the variable $x$ is contained in the interval $\big(-M-\phi(t), M+\phi(t)\big)$, where $\phi(t)=
\f{2}{3}\,t^{\f{3}{2}}$. Then it follows from an integration by
parts that
\begin{equation*}
G''(t)=\int_{\R}|u(t,x)|^p\, dx
\geq \frac{\left|\int_{\R}u(t,x)\,dx\right|^p}{
\left(\int_{|x|\leq M+\phi(t)}\,dx\right)^{p-1}}
\geq C(M+t)^{-\frac{3}{2}\,(p-1)}\,|G(t)|^p,
\end{equation*}
which means that $G(t)$ fulfills inequality \eqref{equ:2.2} with $
q=\frac{3}{2}\,\left(p-1\right)$ (once inequality \eqref{equ:2.1}
has been verified, we then know that $G$ is positive).

To establish
\eqref{equ:2.1}, we will introduce a suitable test function. The modified Bessel function
is
\begin{equation*}
K_\nu(t)=\int_0^\infty e^{-t\cosh{z}}\cosh(\nu z)dz, \quad \nu\in \R,
\end{equation*}
which satisfies
\[
\left(t^2\frac{d^2}{dt^2}+t\frac{d}{dt}-(t^2+\nu^2)\right)K_\nu(t)=0, \quad t>0.
\]

From page 24 of \cite{Erd2}, we have
\begin{equation}\label{equ:2.5}
  K_\nu(t)=\sqrt{\frac{\pi}{2t}}\,e^{-t}\left(1+O(t^{-1})\right) \quad
  \text{as $t\rightarrow \infty$,}
\end{equation}
provided that $\operatorname{Re}\nu>-1/2$. Set
\begin{equation}\label{equ:2.6}
\lambda(t)=C_1\,t^{\frac{1}{2}}K_{\frac{1}{3}}\left(\frac{2}{3}\,t^{\frac{3}{2}}\right),
\quad t>0,
\end{equation}
where the constant $C_1>0$ is chosen so that $\lambda(t)$ satisfies
\begin{equation}\label{equ:2.7}
\left\{ \enspace
\begin{aligned}
&\la''(t)-t\la(t)=0, && t\geq0 \\
&\la(0)=1, \quad \la(\infty)=0.
\end{aligned}
\right.
\end{equation}

Introduce the test function $\psi$ with
\begin{equation}\label{equ:2.9}
\psi(t,x)=\lambda(t)\varphi(x),
\end{equation}
where $\varphi=e^x$.
Let
\begin{equation}\label{equ:2.10}
G_1(t)=\int_{\R}u(t,x)\psi(t,x)\,dx.
\end{equation}
Then
\begin{equation}\label{equ:2.11}
G''(t)=\int_{\R}|u(t,x)|^p\, dx \geq
\frac{|G_1(t)|^p}{\left(\int_{|x|\leq
    M+\phi(t)}\psi(t,x)^{\frac{p}{p-1}}\,dx\right)^{p-1}}.
\end{equation}

Since the function \(\varphi(x)>0\) holds for all \(x\in\R\), one can repeat the proof of Lemma 2.3 in \cite{HWYin1} with little modification to get

\begin{lemma}\label{lem2.3}
Under the assumptions of\/ \textup{Theorem~\ref{thm1.1}}, there exists
a $t_0>0$ such that
\begin{equation}\label{equ:2.12}
G_1(t)\geq C\,t^{-\frac{1}{2}}, \quad t\geq t_0.
\end{equation}
\end{lemma}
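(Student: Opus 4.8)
The plan is to exploit the fact that the test function $\psi(t,x)=\la(t)e^x$ is an exact solution of the homogeneous Tricomi equation, so that pairing it against $u$ converts the nonlinear problem into a tractable first-order ODE inequality for $G_1$. First I would record the elementary properties of $\la$ coming from \eqref{equ:2.6}--\eqref{equ:2.7} and the Bessel asymptotics \eqref{equ:2.5}: since $K_{1/3}>0$ one has $\la>0$ (hence $\psi>0$); since $\la''=t\la>0$ on $(0,\infty)$ the function $\la$ is convex, and being convex with $\la(0)=1$, $\la(\infty)=0$ it must be strictly decreasing, whence $\la'(0)\le 0$; and \eqref{equ:2.5} yields the decay $\la(t)\sim c\,t^{-1/4}e^{-\f{2}{3}t^{3/2}}=c\,t^{-1/4}e^{-\phi(t)}$ as $t\to\infty$. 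A direct computation using $\p_x^2(\la e^x)=\la e^x$ and $\la''=t\la$ confirms $\p_t^2\psi-t\p_x^2\psi=0$.

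The central identity is obtained from a Wronskian-type quantity. Set $P(t)=\int_{\R}\big(\p_t u\,\psi-u\,\p_t\psi\big)\,\md x$. Differentiating, the cross terms cancel, and I substitute $\p_t^2 u=t\p_x^2u+|u|^p$ together with $\p_t^2\psi=t\p_x^2\psi$; integrating by parts twice in $x$ (the boundary terms vanish because $u(t,\cdot)$ has compact support) makes the second-order spatial contributions cancel, leaving $P'(t)=\int_{\R}|u|^p\psi\,\md x\ge 0$. Hence $P$ is nondecreasing, and evaluating at the initial time gives $P(t)\ge P(0)=\int_{\R}u_1 e^x\,\md x-\la'(0)\int_{\R}u_0 e^x\,\md x>0$, where the strict positivity follows from $u_0,u_1\ge 0$, $u_i\not\equiv 0$ together with $\la'(0)\le 0$.

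Next I would convert this into an ODE for $G_1$. Since $\p_t\psi=(\la'/\la)\psi$, one has $\int_{\R} u\,\p_t\psi\,\md x=(\la'/\la)G_1$, and combining $G_1'=\int_{\R}(\p_t u\,\psi+u\,\p_t\psi)\,\md x$ with the definition of $P$ gives
\begin{equation*}
G_1'(t)-2\,\f{\la'(t)}{\la(t)}\,G_1(t)=P(t)\ge P(0)>0.
\end{equation*}
Multiplying by the integrating factor $\la(t)^{-2}$ turns the left-hand side into $\f{\md}{\md t}\big(\la^{-2}G_1\big)$, so integrating from $t_0$ to $t$ yields
\begin{equation*}
\la(t)^{-2}G_1(t)\ge \la(t_0)^{-2}G_1(t_0)+P(0)\int_{t_0}^t\la(s)^{-2}\,\md s .
\end{equation*}

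Finally I would extract the rate. Using $\la(s)^{-2}\sim c^{-2}s^{1/2}e^{\f{4}{3}s^{3/2}}$ and $\f{\md}{\md s}e^{\f{4}{3}s^{3/2}}=2s^{1/2}e^{\f{4}{3}s^{3/2}}$, the integral is asymptotically $\tfrac12 c^{-2}e^{\f{4}{3}t^{3/2}}$, so after multiplying through by $\la(t)^2\sim c^2 t^{-1/2}e^{-\f{4}{3}t^{3/2}}$ the dominant term is $\sim \tfrac12 P(0)\,t^{-1/2}$, while the boundary contribution $\la(t)^2\la(t_0)^{-2}G_1(t_0)$ decays like $e^{-\f{4}{3}(t^{3/2}-t_0^{3/2})}$ and is therefore negligible for $t$ large (so the sign of $G_1(t_0)$ is irrelevant once $t_0$ is fixed large). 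Choosing $t_0$ sufficiently large thus gives $G_1(t)\ge C t^{-1/2}$, which is \eqref{equ:2.12}. The main obstacle is this last step: one must track the Bessel asymptotics \eqref{equ:2.5} carefully enough to see that the two exponential factors in $\la(t)^2\int_{t_0}^t\la^{-2}\,\md s$ cancel precisely, leaving exactly the algebraic rate $t^{-1/2}$, and to confirm that the convexity/monotonicity of $\la$ indeed forces $\la'(0)\le 0$ and hence $P(0)>0$.
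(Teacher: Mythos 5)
Your proposal is correct, and it is essentially the paper's own argument: the paper simply invokes the proof of Lemma~2.3 of \cite{HWYin1} (the Yordanov--Zhang test-function method), which is exactly your computation --- the Wronskian quantity $P(t)=\int_{\R}(\p_t u\,\psi-u\,\p_t\psi)\,\md x$ with $P'=\int|u|^p\psi\ge 0$ and $P(0)>0$ from $\la'(0)\le 0$, the resulting inequality $G_1'-2(\la'/\la)G_1\ge P(0)$ integrated via the factor $\la^{-2}$, and the Bessel asymptotics \eqref{equ:2.5} producing the rate $t^{-1/2}$. The only cosmetic point is the mild circularity in reusing $t_0$ both as the lower limit of integration and as the threshold in \eqref{equ:2.12}; this is resolved by integrating from a fixed $t_1$ and taking $t_0$ large relative to $t_1$, as your remark about the negligible boundary term already indicates.
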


Based on Lemma~\ref{lem2.3}, we are now able to prove
Theorem~\ref{thm1.1}.

\vskip 0.2 true cm

{\bf Proof of Theorem~\ref{thm1.1}.}
By \eqref{equ:2.5} and \eqref{equ:2.6}, we have that
\[
\lambda(t)\sim t^{-\frac{1}{4}}e^{-\phi(t)} \quad \text{as $t\rightarrow \infty$.}
\]
Next we estimate the denominator $\left(\int_{|x|\leq
  M+\phi(t)}\psi(t,x)^{\frac{p}{p-1}} \, dx\right)^{p-1}$ in
\eqref{equ:2.11}. Note that
\begin{equation*}
\left(\int_{|x|\leq
  M+\phi(t)}\psi(t,x)^{\frac{p}{p-1}}\,dx\right)^{p-1}
=\lambda(t)^p\left(\int_{|x|\leq
  M+\phi(t)}\varphi(x)^{\frac{p}{p-1}}\,dx\right)^{p-1}
\end{equation*}
and
\[
|\varphi(x)|\leq Ce^{|x|}.
\]
Then
\begin{align*}
\int_{|x|\leq M+\phi(t)}\varphi(x)^{\frac{p}{p-1}}\,dx & \le C\int_0e^{\frac{p}{p-1}r}\,dr
+C\int_{\frac{M+\phi(t)}{2}}^{M+\phi(t)}e^{\frac{p}{p-1}r}\,dr \\
& \leq C e^{\f{p}{p-1}\cdot\frac{M+\phi(t)}{2}}+e^{\f{p}{p-1}\left(M+\phi(t)\right)} \\
& \leq Ce^{\f{p}{p-1}\left(M+\phi(t)\right)}
\end{align*}
and
\begin{equation}\label{equ:2.16}
\begin{aligned}
\left(\int_{|x|\leq M+\phi(t)}\psi(t,x)^{\frac{p}{p-1}}\,dx\right)^{p-1}
&\leq Ct^{-\frac{p}{4}}e^{-p\phi(t)}e^{p\left(M+\phi(t)\right)} \\
&\leq Ct^{-\frac{p}{4}}.
\end{aligned}
\end{equation}
Therefore, it follows from \eqref{equ:2.11} and \eqref{equ:2.16} that, for $t\ge t_0$,
\begin{equation}\label{equ:2.17}
G''(t)\geq ct^{-\frac{p}{4}}.
\end{equation}

If \(-\f{p}{4}>-1\), then \(p<4\). In this case, we have
\[G(t)\geq\left(M+t\right)^{2-\f{p}{4}}\]
and further get \(\alpha=2-\f{p}{4}\) in \eqref{equ:2.1}. Hence the condition $\left(p-1\right)\alpha\geq q-2$ is written as
\[(p-1)\left(2-\f{p}{4}\right)>\f{3}{2}(p-1)-2,\]
which is equivalent to
\begin{equation}\label{equ:2.18}
  p^2-3p-6<0
\end{equation}
The positive root of \(p^2-3p-6=0\) is
\[p_1=\f{3+\sqrt{33}}{2}.\]
It follows from \eqref{equ:2.18} that \(p<p_1\). Direct computation shows \(p_1>4\). Thus, by Lemma 5.1 we know that
the  weak solution $u$ of problem \eqref{equ:original} will blow up in finite time when \(1<p<4\).


While if \(-\f{p}{4}\leq-1\) or \(p\geq 4\), we have
\[G(t)\geq C(M+t),\]
then we can take \(\alpha=1\) in \eqref{equ:2.1} and further obtain
\[(p-1)>\f{3}{2}(p-1)-2\Longrightarrow p<p_2=5,\]
which means that the solution $u$ of problem \eqref{equ:original} will blow up  for \(4\le p<5\)
in terms of Lemma 5.1.

Collecting these results above, we complete the proof of Theorem~\ref{thm1.1}.
\qquad \qquad \qquad \qquad \quad $\square$


\end{document}